\documentclass[12pt,index]{amsart}

\usepackage{amssymb}
\usepackage{mathtools}

\usepackage{enumerate,xcolor,mathrsfs}
\usepackage[pagebackref,colorlinks,linkcolor=red,citecolor=blue,urlcolor=blue,hypertexnames=true]{hyperref}

\usepackage{ulem,xspace} 


\newcommand{\df}[1]{{\it{#1}}{\index{#1}}}

\renewcommand{\subset}{\subseteq}
\setcounter{tocdepth}{3}

\linespread{1.176}

\textwidth = 6.5 in 
\textheight = 8.5 in 
\oddsidemargin = 0.0 in 
\evensidemargin = 0.0 in
\topmargin = 0.0 in
\headheight = 0.0 in
\headsep = 0.3 in
\parskip = 0.05 in
\parindent = 0.3 in




\makeatletter
\newsavebox\myboxA
\newsavebox\myboxB
\newlength\mylenA

\newcommand*\xoverline[2][0.75]{%
    \sbox{\myboxA}{$\m@th#2$}%
    \setbox\myboxB\null
    \ht\myboxB=\ht\myboxA%
    \dp\myboxB=\dp\myboxA%
    \wd\myboxB=#1\wd\myboxA
    \sbox\myboxB{$\m@th\overline{\copy\myboxB}$}
    \setlength\mylenA{\the\wd\myboxA}
    \addtolength\mylenA{-\the\wd\myboxB}%
    \ifdim\wd\myboxB<\wd\myboxA%
       \rlap{\hskip 0.5\mylenA\usebox\myboxB}{\usebox\myboxA}%
    \else
        \hskip -0.5\mylenA\rlap{\usebox\myboxA}{\hskip 0.5\mylenA\usebox\myboxB}%
    \fi}
\makeatother


\newtheorem{theorem}            {Theorem}[section]
\newtheorem{corollary}          [theorem]{Corollary}
\newtheorem{proposition}        [theorem]{Proposition}

\newtheorem{lemma}              [theorem]{Lemma}
\newtheorem{remark}         [theorem]{Remark}

\newtheorem{assumption}     [theorem]{Assumption}



\newcommand{\C}{\mathbb{C}}

\newcommand{\cD}{\mathcal{D}}

\newcommand{\gv}{{\tt{g}}}
\newcommand{\bcdot}{{\bf{\cdot}}}
\newcommand{\tX}{\mathfrak{X}}

\newcommand{\eit}{e^{i\theta}}
\newcommand{\eito}{e^{i\theta_1}}
\newcommand{\emit}{e^{-i\theta}}

\newcommand{\cM}{\mathcal{M}}
\newcommand{\cN}{\mathcal{N}}
\newcommand{\cH}{\mathcal{H}}

\newcommand{\vg}{{\tt{g}}}
\newcommand{\matn}{M_n(\C)}
\newcommand{\matg}{M(\C)^g}
\newcommand{\matng}{M_n(\C)^{\vg}}
\newcommand{\matdg}{M_d(\C)^{\vg}}
\newcommand{\mattwo}{M(\C)^2}
\newcommand{\matntwo}{M_n(\C)^2}

\newcommand{\fP}{\mathfrak{P}}

\newcommand{\Langle}{\mathop{<}\!}
\newcommand{\Rangle}{\!\mathop{>}}

\newcommand{\xx}{\!\Langle x\Rangle}

\makeatletter
\def\moverlay{\mathpalette\mov@rlay}
\def\mov@rlay#1#2{\leavevmode\vtop{
		\baselineskip\z@skip \lineskiplimit-\maxdimen
		\ialign{\hfil$#1##$\hfil\cr#2\crcr}}}
\makeatother

\newcommand{\eo}{e_1}
\newcommand{\fo}{f_1}
\newcommand{\et}{e_2}
\newcommand{\ft}{f_2}

\newcommand{\Gp}{G}
\newcommand{\Ip}{J^{+}}

\newcommand{\Em}{E^{-}}
\newcommand{\Ep}{E^{+}}
\newcommand{\Fp}{F^{+}}
\newcommand{\Fm}{F^{-}}

\newcommand{\CC}{\mathbb{C}}

\newcommand{\EE}{F}

\newcommand{\dd}{\delta}
\newcommand{\altL}{\mathfrak{L}}
\newcommand{\mm}{{\tt{e}}}
\newcommand{\jj}{k}


\makeindex

\title[Reinhardt Spectrahedra]{Reinhardt Free Spectrahedra}

\author[McCullough]{Scott McCullough${}^1$}
\address{Department of Mathematics\\
  University of Flordia \\ Gainesville, FL}
\email{sam@ufl.edu}
\thanks{${}^1$Research Supported by NSF grant DMS-1764231.}

\author[Tuovila]{Nicole Tuovila} 
\address{Department of Mathematics\\
  University of Flordia \\ Gainesville, FL}
\email{n2vila@ufl.edu}

\subjclass[2010]{47L25, 32H02 (Primary); 52A05, 46L07 (Secondary)}
\keywords{bianalytic map, birational map, free spectrahedron, free analysis}

\setcounter{tocdepth}{4}

 \numberwithin{equation}{section}

\begin{document}

\begin{abstract}
 Free spectrahedra are natural objects in the theories
 of operator systems and spaces and completely positive
 maps. They also appear in various engineering applications.
 In this paper, free spectrahedra
 satisfying a Reinhardt symmetry condition are characterized
 graph theoretically. It is also shown that, for a simple
 class of such spectrahedra, automorphisms are trivial.
\end{abstract}

\maketitle

\thispagestyle{empty}

\section{Introduction}
 The articles \cite{longmaps,bestmaps}
 characterize, respectively,  bianalytic maps between free spectrahedra under hypotheses
 that are, in the sense of algebraic geometry, generic; and bianalytic
 maps between spectraballs. Free spectrahedra satisfying
 a Reinhardt condition represent 
 arguably the simplest
 class of free spectrahedra  not covered by these results.

 This paper contains two main results. Theorem~\ref{t:graph}
 gives a graph theoretic characterization of Reinhardt free
  spectrahedra. Theorem~\ref{t:mainR} shows that, for 
 a class of Reinhardt spectrahedra, automorphisms are trivial.

\subsection{Free spectrahedra}
 Let $\matn$ \index{$\matn$}  denote the $n\times n$ matrices with entries from $\mathbb{C}.$
 For $T\in \matn,$
the notation $T\succ 0$ (resp. $T\succeq 0$) indicates that $T$ is \df{positive definite}
 (resp. \df{positive semidefinite}).  \index{$\succ 0$} \index{$\succeq 0$}

 Fix a positive integer $\vg.$ Let $\matng$ \index{$\matng$} 
 denote the set of $\vg$-tuples of $n\times n$ matrices
 and let $x=(x_1,\dots,x_{\vg})$ denote  a tuple of indeterminants. Given $A=(A_1,\dots,A_{\vg})\in \matdg,$
 the expression \index{$L_A$}  \index{$\Lambda_A$}
\begin{equation*}
 L_A(x)  = I_d - \sum_{j=1}^{\vg}  A_j x_j -\sum_{j=1}^{\vg}  A_j^* x_j^* =
  I-\Lambda_A(x)-\Lambda_A(x)^*,
\end{equation*}
 where  $\Lambda_A(x)$ is the linear matrix polynomial,
\[
 \Lambda_A(x) = \sum_{j=1}^{\vg}  A_j x_j,
\]
 is a \df{monic linear pencil}. The formal expression 
 $L_A(x)\succeq 0$ is called a  \df{Linear Matrix Inequality, LMI.}

The pencil $L_A$  \df{evaluates} at a $\vg$-tuple $X=(X_1,\dots,X_{\vg}) \in \matng,$
 outputting an element of $\mathbb{S}_{dn}(\C),$ \index{$\mathbb{S}_{dn}(\C)$}
 the set of 
 self-adjoint matrix of size $dn,$  using the tensor (Kronecker)
  product as
\[
 L_A(X) = I_d\otimes I_n -\sum_{j=1}^{\vg} A_j\otimes X_j
 - \sum_{j=1}^{\vg} A_j^*\otimes X_j^*  = I_{dn} -\Lambda_A(X)
  -\Lambda_A(X)^*.
\]
 Alternately, $L_A(x)$ can be viewed
 as a matrix of  affine linear scalar polynomials and
 the evaluation $L_A(X)$ as entrywise affine combinations of the $X_j.$
 In the case $X=\zeta=(\zeta_1,\dots,\zeta_{\vg})\in \CC^{\vg} = M_1(\C)^{\vg},$
 the evaluation is simply  $L_A(\zeta) = I_d - \sum A_j \zeta_j 
- \sum A_j^*  \zeta_j^* \in \mathbb{S}_d(\C).$

  The solution set of the LMI $L_A(x)\succeq 0$
  at \df{level $n$} is the set \index{$\cD_A[n]$}
\[
 \cD_A[n] = \{X\in \matng: L_A(X)\succeq 0\}.
\]
  The set $\cD_A[1] = \{\zeta\in \CC^{\vg}: L_A(\zeta) \succeq 0\}$
 is known as a \df{spectrahedron}. Spectrahedra figure prominently
 in optimization and semidefinite programming (SDP) and real algebraic geometry.
 Semidefinite programs generalize linear programs and they 
 are now found in a diverse range of applications.  Control theory,
 combinatorics, and quantum information represent just a few
 \cite{BPT,Las,SIG,PNA,Wat,WSV}.

 The sequence
 $\cD_A =(\cD_A[n])_n$ is a \df{free spectrahedron}.
 Free spectrahedra appear in the theories of operator
 systems and spaces and completely positive maps.
 As matricial solution sets of LMIs,  
 they also arise in engineering contexts 
 \cite{SIG,convert-to-matin,emerge}.  In particular, free 
 spectrahedra typically produce tractable relaxations of
 problems involving spectrahedra \cite{BT-N,relax,DDSS,PSS}.
 
 Let \df{$\matg$} denote the sequence $(\matng)_n.$ Let \index{$\fP_A$}
 $\fP_A$ denote the (levelwise) interior of  the 
 free spectrahedron $\cD_A,$
\[
 \fP_A =\{X\in \matg:  L_A(X)\succ 0\}.
\]
  Both $\cD_A$ and $\fP_A$ are \df{free sets} in the following sense.
  Letting $S=(S[n])$ denote either $\cD_A$ or $\fP_A,$ we have
 $S\subseteq \matg$ in the sense that $S[n]\subseteq\matng$
 for each $n$ and 
\begin{enumerate}[(i)]
 \item if $X\in S[n]$ and $Y\in S[m],$ then \index{$X\oplus Y$} \index{$U^*XU$}
\[
 X\oplus Y : = (X_1\oplus Y_1, \cdots, X_{\vg}\oplus Y_{\vg}) \in S[n+m],
\]
 where
\[
 X_j\oplus Y_j :=\begin{pmatrix} X_j &0\\0 & Y_j \end{pmatrix} \in M_{n+m}(\C);
\]
\item if $X\in S[n]$ and $U$ is an $n\times n$ unitary matrix, then
\[
 U^* XU := \left ( U^*X_1 U,\dots, U^* X_{\vg}U \right )\in S[n].
\]
\end{enumerate}

\subsection{Free maps}
 A \df{free map} $f:\fP_A\to \fP_B$ between the interiors of
 free spectrahedra is a sequence of mappings $f=(f[n]),$ where \index{$f[n]$}
 $f[n]:\fP_A[n]\to \fP_B[n],$ such that 
\begin{enumerate}[(i)]
 \item if $X\in \fP_A[n]$ and $Y\in \fP_A[m],$ then
\[
 f[n+m](X\oplus Y) = f[n](X)\oplus f[m](Y); 
\]
\item if $X\in \fP_A[n]$ and $U$ is an $n\times n$ unitary matrix, then
\[
 f[n](U^* XU)=  U^*f[n](X) U. 
\]
\end{enumerate}
 We typically write $f$ in place of $f[n].$  The free map 
 $f$ is \df{analytic} if each $f[n]$ is analytic; and $f$ is
  \df{free bianalytic} or \df{free biholomorphic} if $f$ is 
 analytic and has a free analytic inverse.
 Motivations for studying free biholomorphic maps between free spectrahedra
 stem from  connections with matrix inequalities that arise 
 in systems engineering \cite{emerge,convert-to-matin}  
 and by analogy with rigidity theory in several complex variables. 
 Free biholomorphisms can also be viewed as nonlinear
 completely isometric maps.  We will often just say
 $f$ is bianalytic, dropping the adjective free.

\subsection{Overview and prior results}
 Given a tuple $E=(E_1,\dots,E_{\vg})$ of $k\times \ell$ 
 (not necessarily square) matrices, the spectrahedron 
 $\cD_A,$ where $A\in M_{k+\ell}(\C)^{\vg}$ is given by
\[
 A_j =\begin{pmatrix} 0 & E_j \\0& 0\end{pmatrix},
\]
 is a \df{spectraball}. It is routine to see that a tuple
 $X\in \matng$ is in $\cD_A[n]$ (resp. $\fP_A$) 
 if and only if  $\|\Lambda_E(X)\|\le 1$ (resp. $\|\Lambda_E(X)\|<1$).

 The results of \cite{longmaps,bestmaps} characterize
 bianalytic maps between $\fP_A$ and $\fP_B$
 under certain generic irreducibility hypotheses on 
 the tuples $A$ and $B.$  They also characterize bianalytic
 maps between spectraballs absent any additional hypotheses. 
 A canonical class of free spectrahedra not covered by these
 results are those that are circularly symmetric as defined
 in Subsection~\ref{sec:Reinhardt} immediately below. 
 Theorem~\ref{t:mainR} and its corollary, Corollary~\ref{c:mainR},
 provide evidence for the belief that a bianalytic map between circularly symmetric
 free spectrahedra have a simple algebraic form.

\subsection{Reinhardt spectrahedra}
\label{sec:Reinhardt}
 Let \df{$\mathbb T$} denote the unit circle in the complex plane.
 Thus $\mathbb T^{\vg}$ is the $\vg$-fold torus. 
 A free spectrahedron $\cD$ is {\it Reinhardt} 
 \index{Reinhardt spectrahedron}  if, for each $n,$
 each  $X=(X_1,\dots,X_\gv)\in \cD[n]$  and
 each  $\gamma =(\gamma_1,\dots,\gamma_{\gv})
 \in \mathbb T^{\gv},$ we have \index{$\gamma\bcdot X$}
\[
  \gamma \bcdot  X = \left ( \gamma_1 X_1,\dots, \gamma_{\gv} X_{\gv}
 \right ) \in \cD.
\] 
 A related, more general,  class of free spectrahedra is the circular
 ones. A free spectrahedron $\cD$ is \df{circular} if, for
 each $n$, each $X\in \cD[n]$ and each $\gamma\in \mathbb T$,
 we have $\gamma  X \in \cD[n].$

 This paper contains two main results. One is
 a graph theoretic characterization of Reinhardt free spectrahedra.
  See Theorem \ref{t:graph} in Section~\ref{s:FRgraph}.
   Theorem~\ref{t:mainR}   below is the other.

 Given positive integers $k,m,n$ and nonzero matrices $C_1\in M_{k,m}(\C)$
 and $C_2\in M_{m,n}(\C),$
 let $d=k+m+n$ and let $\EE_1$ and $\EE_2$ denote the block \df{$\EE$}
 $d\times d$ matrices 
\[
 \EE_1 =\begin{pmatrix}0&C_1&0\\0&0&0\\0&0&0\end{pmatrix}, \ \ \
 \EE_2=\begin{pmatrix}0&0&0\\0&0&C_2\\0&0&0\end{pmatrix}.
\] 
Scaling the $C_j$ corresponds to a simple scaling of $\fP_{\EE}.$ 
Thus there is no harm in assuming, as we now do, that 
 $\|C_j\|=1$ for $j=1,2.$  In particular,
\[
 C_1^* C_1 + C_2C_2^* \not \prec I
\]

The \df{free bidisk} is the free set $\{(X_1,X_2)\in \mattwo: \|X_j\|<1\}.$
 Note that the free bidisk is the interior of the  direct sum of the 
  spectraball $\{X\in M(\C)^1: \|X\|\le 1\}$ with itself.

\begin{theorem}\label{t:mainR}
 If $C_1^*C_1 + C_2C_2^*\preceq I,$ then $\fP_{\EE}$ is the free bidisk.
  Otherwise, 
 if $\varphi:\fP_{\EE} \to\fP_{\EE}$ is bianalytic, then there exists
 $\gamma =(\gamma_1,\gamma_2)\in \mathbb T^2$ such that 
 \[
 \varphi(x)=\gamma\cdot x = (\gamma_1 x_1,\gamma_2 x_2).
\]
\end{theorem}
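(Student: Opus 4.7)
The plan is to first compute $\fP_{\EE}$ explicitly via iterated Schur complement, and then analyze its automorphism group by combining Cartan uniqueness at each level with the free compatibility axioms.

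\emph{Explicit description of $\fP_{\EE}$ and the free bidisk case.} Applying the Schur complement twice to the block-tridiagonal pencil $L_{\EE}(X)$ yields
\[
L_{\EE}(X) \succ 0 \iff (C_1^* C_1) \otimes X_1^* X_1 + (C_2 C_2^*) \otimes X_2 X_2^* \prec I.
\]
Under $C_1^* C_1 + C_2 C_2^* \preceq I$, the inclusion $\fP_{\EE} \supseteq$ free bidisk is immediate from $X_j^* X_j, X_j X_j^* \preceq \|X_j\|^2 I$. Conversely, if $v$ is a unit eigenvector of $C_1^* C_1$ with eigenvalue $1$ (such $v$ exists since $\|C_1\| = 1$), the inequality $C_1^* C_1 + C_2 C_2^* \preceq I$ forces $C_2^* v = 0$; evaluating the defining inequality on vectors $v \otimes w$ then yields $\|X_1 w\|^2 < 1$ for every unit $w$, so $\|X_1\| < 1$, and symmetrically $\|X_2\| < 1$.

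\emph{Rigidity in the non-bidisk case.} Assume $C_1^* C_1 + C_2 C_2^* \not\preceq I$ and let $\varphi : \fP_{\EE} \to \fP_{\EE}$ be free bianalytic. I would proceed in three stages. First, show $\varphi(0) = 0$: at level $1$, $\fP_{\EE}[1]$ is a bounded Reinhardt domain in $\C^2$ that is \emph{not} a bidisk (the latter would force $C_1^* C_1 + C_2 C_2^* \preceq I$ by the first part). Classical Reinhardt-domain theory (Sunada--Shimizu--Kruzhilin) gives that $\operatorname{Aut}(\fP_{\EE}[1])$ fixes the origin unless $\fP_{\EE}[1]$ is biholomorphic to a Euclidean ball; in this latter subcase I would invoke the free structure at levels $n\geq 2$, where the asymmetric occurrence of $X_2 X_2^*$ (and not $X_2^* X_2$) in the defining inequality means $\fP_{\EE}[n]$ is not biholomorphic to a free ball, obstructing any Möbius-type automorphism of the level-$1$ ball from extending to a free map of $\fP_{\EE}$. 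The free compatibility $\varphi[n](x I_n) = \varphi[1](x) \otimes I_n$ then promotes $\varphi[1](0) = 0$ to $\varphi[n](0) = 0$ at every level. Second, apply Cartan's uniqueness theorem to each bounded circular $\fP_{\EE}[n]$ to conclude $\varphi[n]$ is linear. Third, combine unitary equivariance $\varphi[n](U^* X U) = U^* \varphi[n](X) U$ with the direct-sum identity $\varphi[n_1 + n_2](X \oplus Y) = \varphi[n_1](X) \oplus \varphi[n_2](Y)$: the unitary commutant forces each component of $\varphi[n]$ to be of the form $X \mapsto \alpha X + \beta\operatorname{tr}(X) I$, and plugging $Y=0$ into the direct-sum identity kills the trace part, so each $\varphi[n]$ is the componentwise action of a single matrix $M \in M_2(\C)$, i.e. $\varphi(X) = (m_{11} X_1 + m_{12} X_2,\ m_{21} X_1 + m_{22} X_2)$.

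\emph{Diagonality, unit modulus, and the main obstacle.} At level $1$, $M$ is a linear automorphism of the non-polydisk Reinhardt domain $\fP_{\EE}[1]$, so by the classical theorem $M$ is either diagonal or the coordinate swap, in either case with unimodular entries. To rule out the swap (and any nonzero off-diagonal $m_{ij}$), I would construct an explicit $X \in \fP_{\EE}[2]$ whose image leaves $\fP_{\EE}[2]$: take rank-one $X_1, X_2 \in M_2(\C)$ with $X_1^* X_1$ and $X_2 X_2^*$ supported on orthogonal lines in $\C^2$ (keeping the original inequality strictly inside $\prec I$) while $X_1 X_1^*$ and $X_2^* X_2$ are supported on the \emph{same} line, so that after swapping, the two contributions add in the same direction and push the operator to the boundary. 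Applying the same analysis to $M^{-1}$ forces $|m_{jj}| = 1$. The \textbf{main obstacle} is fixing the origin in the subcase where $\fP_{\EE}[1]$ is a Euclidean ball (which occurs essentially when $C_1^*C_1 = C_2 C_2^*$); here classical Reinhardt rigidity alone is inadequate and one must exploit the $X^*X$ versus $XX^*$ asymmetry of the defining inequality at levels $n\geq 2$, which is precisely the structural feature distinguishing $\fP_{\EE}$ from the spectraballs treated in \cite{longmaps,bestmaps}.
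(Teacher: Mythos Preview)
Your route is genuinely different from the paper's. You try to fix the origin via classical several-complex-variables rigidity on the scalar slice $\fP_{\EE}[1]$, then invoke Cartan for linearity and the free axioms to identify the matrix $M$. The paper never looks at $\operatorname{Aut}(\fP_{\EE}[1])$ at all: it evaluates $\varphi$ on explicit nilpotent tuples of order two and three lying in $\partial\fP_{\EE}$, uses a Carath\'eodory interpolation lemma to pin down the Taylor coefficients of $\varphi$ through degree two as functions of $b=\varphi(0)$ alone (Lemmas~\ref{l:eitheror}, \ref{l:Cinterpolate}, \ref{l:constraints}, \ref{l:b1e1f1}, \ref{l:b2e2f2}), and then exhibits a boundary nilpotent whose image leaves $\overline{\fP_{\EE}}$ unless $b=0$ (Lemma~\ref{l:boundbelow}). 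Only after $b=0$ is established does the paper cite \cite[Theorem~4.4]{proper} for linearity, at which point the two arguments reconverge.

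The step you yourself flag as the ``main obstacle'' is a genuine gap, and your sketch does not close it. Take $k=m=n=1$ and $C_1=C_2=1$: then $C_1^*C_1+C_2C_2^*=2\not\le 1$, so we are squarely in the non-bidisk regime, yet $\fP_{\EE}[1]=\{|x_1|^2+|x_2|^2<1\}$ is the Euclidean ball, whose automorphism group is transitive. Observing that $\fP_{\EE}[n]$ for $n\ge 2$ is ``not a free ball'' because of the $X_2X_2^*$-versus-$X_2^*X_2$ asymmetry does not, by itself, prevent a level-$1$ M\"obius map from extending to a free automorphism of $\fP_{\EE}$; the torus rotations extend perfectly well, and what you need is an explicit mechanism that separates them from the origin-moving maps. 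That mechanism is exactly the paper's nilpotent-plus-interpolation computation, which forces $b=0$ uniformly with no case split on the shape of $\fP_{\EE}[1]$. The same ball example also undercuts your later appeal to ``the classical theorem'' to conclude $M$ is diagonal or the swap: when $\fP_{\EE}[1]$ is a ball every element of $U(2)$ is a linear automorphism, so level $1$ gives no diagonality and you are again thrown back onto a level-$n\ge 2$ argument that you have only outlined.
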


 The proof of Theorem~\ref{t:mainR} 
 occupies Section~\ref{s:mainR}.
  Combining Theorem \ref{t:mainR} with results from \cite{proper}
 and \cite{billvolume} produces the following corollary.

\begin{corollary} \label{c:mainR}
 Suppose $\cD_B$ is a circular free spectrahedron and $\fP_{\EE}$ is not the free bidisk. 
  If $f:\fP_{\EE}\to \fP_B$
 is a free biholomorphism, then $f$ is linear.
\end{corollary}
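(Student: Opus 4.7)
\emph{Proof plan.} The plan is to combine the rigidity of automorphisms of $\fP_\EE$ established in Theorem~\ref{t:mainR} with a free Cartan-type theorem from \cite{proper,billvolume} for biholomorphisms between circular free spectrahedra that fix the origin. Since the Reinhardt condition, with $\gamma_1=\gamma_2$, specializes to circular symmetry, $\fP_\EE$ is itself circular; $\fP_B$ is circular by hypothesis.

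For $\gamma\in\mathbb T$ let $R_\gamma$ denote scalar rotation $X\mapsto\gamma X$, which is an automorphism of $\fP_B$. The conjugate
\[
\psi_\gamma \;:=\; f^{-1}\circ R_\gamma\circ f
\]
is a free biholomorphism of $\fP_\EE$ onto itself, so by Theorem~\ref{t:mainR} there exists $\alpha(\gamma)=(\alpha_1(\gamma),\alpha_2(\gamma))\in\mathbb T^2$ with $\psi_\gamma(x)=\alpha(\gamma)\bcdot x$. Rearranging yields the intertwining relation
\[
\gamma\, f(x)\;=\;f\bigl(\alpha(\gamma)\bcdot x\bigr),\qquad \gamma\in\mathbb T,\ x\in\fP_\EE.
\]
Specializing to $x=0$ (which lies in $\fP_\EE$ since $L_\EE(0)=I\succ 0$) and observing that $\alpha(\gamma)\bcdot 0=0$, one obtains $\gamma\, f(0)=f(0)$ for every $\gamma\in\mathbb T$; hence $f(0)=0$.

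With $f(0)=0$ in hand, the proof concludes by quoting the free analogue of Cartan's uniqueness theorem developed in \cite{proper,billvolume}: a free biholomorphism between bounded circular free spectrahedra which fixes the origin is necessarily linear. The main obstacle lies not in the short argument above but in the bookkeeping step of identifying within \cite{proper,billvolume} the precise statement that applies and verifying that its hypotheses are met by $\fP_\EE$ and $\fP_B$; in particular, one needs boundedness, which for $\fP_\EE$ is immediate from the nilpotent block form of $\EE_1,\EE_2$ together with the normalization $\|C_j\|=1$. Once those citations are in place, linearity of $f$ follows immediately.
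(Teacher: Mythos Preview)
Your argument is correct, and it reaches $f(0)=0$ by a different mechanism than the paper does. The paper first invokes \cite{billvolume} to produce some linear biholomorphism $g:\fP_\EE\to\fP_B$, then observes that $h=g^{-1}\circ f$ is an automorphism of $\fP_\EE$; Theorem~\ref{t:mainR} forces $h(0)=0$, whence $f(0)=g(0)=0$. You instead exploit the circularity of $\fP_B$ directly: conjugating each rotation $R_\gamma$ of $\fP_B$ by $f$ yields an automorphism $\psi_\gamma$ of $\fP_\EE$, Theorem~\ref{t:mainR} identifies $\psi_\gamma$ as a torus rotation, and evaluating the resulting intertwining relation at $0$ gives $\gamma f(0)=f(0)$ for all $\gamma$, so $f(0)=0$. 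Both proofs then finish with the same citation, \cite[Theorem~4.4]{proper}. Your route is slightly leaner in that it avoids the auxiliary result from \cite{billvolume}; the paper's route has the advantage that it names the precise theorem being used at each step, whereas your final paragraph defers that bookkeeping. The boundedness concern you raise is not an obstacle here: $\fP_\EE$ sits inside the free bidisk by Lemma~\ref{l:freethebidisk}, and \cite[Theorem~4.4]{proper} applies to proper free maps between circular domains fixing the origin.
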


\begin{proof}
 By a main result from \cite{billvolume}, if two circular free
 spectrahedra are biholomorphic, then they are linearly biholomorphic.
 Hence there is a biholomorphism $g:\fP_{\EE}\to \fP_B$ with 
 $g(0)=0.$ It follows that $h:=g^{-1}\circ f :\fP_{\EE}\to \fP_{\EE}$
 is biholomorphic and hence, by Theorem~\ref{t:mainR}, $h(0)=0.$ It follows that 
 $f(0)=0.$  By \cite[Theorem~4.4]{proper},  $f$ is linear.
\end{proof}

\subsection{Reader's guide}
 The body of this paper has three sections.
 Section~\ref{sec:graphstuff} isolates the purely graph theoretic
 preliminaries used in  the proof of Theorem~\ref{t:graph}, 
 the graph theoretic characterization of Reinhardt free spectrahedra.
  Theorem~\ref{t:graph}
 is stated and proved in Section~\ref{s:FRgraph}. Section~\ref{s:mainR}
 is devoted to the proof of Theorem~\ref{t:mainR}.

\section{Some graph theory preliminaries}
\label{sec:graphstuff}
 Let {$G=(V,\Ep)$
  be a \df{directed acyclic  graph} (\df{dag}),
  with vertices $V$ and edges $E^+\subset V\times V.$ \index{$G=(V,\Ep)$}
  We do not allow
 self cycles; that is, if $(v,w)\in \Ep,$ then $v\ne w.$  
 Acyclicity implies, if $(v,w)\in \Ep,$ then $(w,v)\notin \Ep.$
 
 A set $P=\{\Ep_1,\dots,\Ep_m\},$ with $\Ep_j\subseteq \Ep,$
 is a \df{partition} of $\Ep$ provided $\Ep_j\cap \Ep_k =\varnothing$ 
  for $j\ne k;$
 and  $\cup_{j=1}^m \Ep_j = \Ep.$  

 Let $\Em=\{(k,j): (j,k)\in E\}$ \index{$\Em$} and let $E=\Ep \cup \Em.$ \index{$E$}
  A cycle $L$ in the graph  $(V,E)$ is \df{$P$-neutral},
 or neutral with respect to the partition $P$, if,
 for each $j,$ the number of edges of $L$ that lie in $\Ep_j$
 and $\Em_j$ are the same.  In particular, a neutral cycle
 has an even number of edges and is not a cycle
 in the directed graph. We will say $\gamma\in \mathbb{T}^m$
 is \df{independent} if $\rho\in \mathbb Z^m$ and 
  $\prod_{j=1}^m \gamma_j^{\rho_j} =1$
  implies $\rho =0.$ Observe that
 $\gamma=(e^{i\theta_1},\dots,e^{i\theta_m})$  is independent if and only if  $\{\theta_1,\dots,\theta_m\}\subset \mathbb R^m$
 is linearly independent over $\mathbb{Z}$ modulo $2\pi$ if
 and only if $\{\theta_1,\dots,\theta_m,2\pi\}$ is linearly 
 independent over $\mathbb Z.$ In particular, independent
 $\gamma\in \mathbb{T}^m$ exist.

\begin{lemma}
 \label{l:neutral-loop}  
  Suppose $G=(V,\Ep)$ is a dag, $P=\{\Ep_1,\dots,\Ep_m\}$
 is a partition of $\Ep$ and $L=(v_0,v_1,\dots,v_n, v_{n+1}=v_0)$ 
 is a cycle in $(V,E)$. The following are equivalent:
\begin{enumerate}[(i)]
 \item \label{i:neutral} $L$ is  $P$-neutral;
 \item \label{i:allgamma}  for each $\gamma \in \mathbb{T}^{m}$
\begin{equation}
\label{e:prod1}
 \sigma_0 \, \sigma_1\, \sigma_2 \, \cdots \, \sigma_n = 1,
\end{equation}
 where $\sigma_j = \gamma_{s_j}$ if 
 $(v_j,v_{j+1})\in \Ep_{s_j}$  and $\sigma_j = \gamma_{s_j}^*$
 if $(v_{j},v_{j+1})\in \Em_{s_j};$ 
 \item \label{i:somegamma} item~\eqref{i:allgamma} holds for some 
 independent $\gamma \in \mathbb{T}^{m}.$
\end{enumerate}
\end{lemma}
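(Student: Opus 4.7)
The plan is to prove the chain $\eqref{i:neutral}\Rightarrow \eqref{i:allgamma}\Rightarrow \eqref{i:somegamma}\Rightarrow \eqref{i:neutral}$, the last implication being where the independence hypothesis does its work.

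For each $j\in\{1,\dots,m\}$, let $a_j$ denote the number of indices $i\in\{0,\dots,n\}$ with $(v_i,v_{i+1})\in \Ep_j$, and let $b_j$ denote the number of indices $i$ with $(v_i,v_{i+1})\in \Em_j$. These counts are well defined because $\Ep_1,\dots,\Ep_m$ partition $\Ep$, so each edge $(v_i,v_{i+1})$ of $L$ (viewed in $E=\Ep\cup\Em$) belongs to exactly one of $\Ep_j$ or $\Em_j$ for exactly one $j$; acyclicity of $G$ plus the no-self-loops condition guarantees $\Ep\cap\Em = \varnothing$, so no edge is double counted. With this notation, $L$ is $P$-neutral exactly when $a_j = b_j$ for every $j$, and for any $\gamma\in\mathbb{T}^m$ the product in~\eqref{e:prod1} rearranges (the factors are scalars, hence commute) to
\[
\sigma_0\sigma_1\cdots\sigma_n \;=\; \prod_{j=1}^m \gamma_j^{a_j}\,(\gamma_j^*)^{b_j} \;=\; \prod_{j=1}^m \gamma_j^{a_j - b_j},
\]
using $|\gamma_j|=1$ so $\gamma_j^* = \gamma_j^{-1}$.

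The implication $\eqref{i:neutral}\Rightarrow\eqref{i:allgamma}$ is then immediate: if $a_j=b_j$ for all $j$, the right-hand side is $1$ regardless of $\gamma$. The implication $\eqref{i:allgamma}\Rightarrow\eqref{i:somegamma}$ is trivial because the existence of an independent $\gamma\in\mathbb{T}^m$ has already been noted in the paragraph preceding the lemma.

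For $\eqref{i:somegamma}\Rightarrow\eqref{i:neutral}$, suppose the product equals $1$ for some independent $\gamma$. By the display above, this means
\[
\prod_{j=1}^m \gamma_j^{a_j - b_j} \;=\; 1.
\]
Setting $\rho_j := a_j - b_j \in \mathbb{Z}$, the independence of $\gamma$ forces $\rho_j = 0$ for every $j$, i.e.\ $a_j = b_j$, which is precisely $P$-neutrality. The only conceptual step is the count-balancing argument in the first paragraph; once that is in hand, the three implications are essentially a one-line computation each, and no serious obstacle is anticipated.
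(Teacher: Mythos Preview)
Your proof is correct and follows essentially the same approach as the paper: both introduce the signed edge-counts for each block of the partition, rewrite the product $\sigma_0\cdots\sigma_n$ as $\prod_j \gamma_j^{a_j-b_j}$, and then read off the three implications, with independence of $\gamma$ forcing $a_j-b_j=0$ in the final step. Your write-up is slightly more explicit than the paper's in justifying that the counts are well defined (noting that acyclicity gives $\Ep\cap\Em=\varnothing$), but the argument is the same.
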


\begin{proof}
 First suppose $L$ is $P$-neutral and $\gamma\in \mathbb{T}^m.$
 For each $1\le k\le m$ the number of edges of $L$ 
 in $\Ep_k$ and $\Em_k$ are the same. Hence 
 $\prod \{ \sigma_j: (v_j,v_{j+1})\in \Ep_k\cup \Em_k\}=1$
 and thus equation~\eqref{e:prod1} holds. Hence
 item~\eqref{i:neutral} implies item~\eqref{i:allgamma}.
 
 To complete the proof if suffices to show item~\eqref{i:somegamma}
 implies item~\eqref{i:neutral}. Accordingly suppose 
 $\gamma\in\mathbb{T}^m$ is independent and equation~\eqref{e:prod1}
 holds. For $1\le j\le m,$ let  $\Gamma_k^\pm  =\{j: (v_j,v_{j+1})\in E_k^\pm\}.$
 Let $\mu_k = |\Gamma_k^+| - |\Gamma_k^-|$ and note that 
\[
1 =  \sigma_0 \, \sigma_1\, \sigma_2 \, \cdots \, \sigma_n 
  = \prod_{k=1}^m  \left ( \prod_{j\in\Gamma_k^+} \sigma_j \, \prod_{j\in \Gamma_k^-} \sigma_j\right )
 = \prod_{k=1}^m  \gamma_k^{\mu_k}.
\]
 By the independence assumption, $\mu_k=0$ and thus
 $L$ is $P$-neutral.
\end{proof}

 We define a \df{Reinhardt partition} $P$ of the 
 graph $G=(V,\Ep)$ to be a partition 
 such that all cycles in the graph $(V,E)$ are $P$-neutral.
 For a function $\delta:V\to \mathbb{T},$ let $\delta_v = \delta(v).$

\begin{proposition}
 \label{p:mainGraph}
 If $P=\{\Ep_1,\dots,\Ep_m\}$ 
 is a Reinhardt partition of the 
 dag $G=(V,\Ep)$, then, for 
 each $\gamma\in\mathbb{T}^m,$  
 there exists a $\delta:V\to \mathbb{T}$  such that,
 for each $1\le j\le m$ and each $(v,w)\in \Ep_j,$
\begin{equation}
 \label{e:deltaV}
 \delta_v = \gamma_j \delta_w.
\end{equation}
\end{proposition}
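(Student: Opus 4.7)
The plan is to work on each connected component of the underlying undirected graph $(V,E)$ separately, so assume without loss of generality that $(V,E)$ is connected. Fix a spanning tree $T\subseteq E$ and a root $v_0\in V$; set $\delta_{v_0}:=1$. Extend $\delta$ along $T$ by the rule dictated by \eqref{e:deltaV}: if $\{u,u'\}$ is a tree edge with $\delta_u$ already defined, put $\delta_{u'}:=\gamma_j^{-1}\delta_u$ when $(u,u')\in \Ep_j$, and $\delta_{u'}:=\gamma_j\delta_u$ when $(u',u)\in \Ep_j$. Acyclicity ensures that exactly one of these two cases applies to each tree edge, and because $T$ is a tree the procedure is unambiguous. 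By construction every tree edge satisfies \eqref{e:deltaV}.

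To finish, it remains to verify \eqref{e:deltaV} for each non-tree edge. Given $(v,w)\in \Ep_j$ with $\{v,w\}$ not a tree edge, let $v=u_0,u_1,\ldots,u_r=w$ be the unique simple path in $T$ between $v$ and $w$. Appending the edge $(w,v)\in \Em_j$ to this path yields a simple cycle $L$ in $(V,E)$, which is $P$-neutral by hypothesis. Apply Lemma~\ref{l:neutral-loop} to $L$ and the given $\gamma$ (no independence hypothesis on $\gamma$ is needed in the direction (i)$\Rightarrow$(ii)) to obtain $\sigma_0\sigma_1\cdots\sigma_r=1$. By the construction of $\delta$ along $T$ one has $\sigma_i=\delta_{u_i}/\delta_{u_{i+1}}$ for $0\le i<r$, while $\sigma_r=\gamma_j^{-1}$ because the closing edge $(w,v)$ lies in $\Em_j$. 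The product telescopes to $(\delta_v/\delta_w)\gamma_j^{-1}=1$, i.e., $\delta_v=\gamma_j\delta_w$, which is \eqref{e:deltaV} for $(v,w)$.

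Conceptually this is the standard trick of integrating a closed $1$-cochain on a graph: pin values down on a spanning tree, and then use that the cocycle vanishes on each fundamental cycle to show the values are consistent on the chords. All of the substantive work has been absorbed into Lemma~\ref{l:neutral-loop}; the remaining burden is bookkeeping, namely tracking whether a given edge is traversed in its positive or negative direction so as to correctly pair $\gamma_j$ versus $\gamma_j^{-1}$ with each step. No serious obstacle is anticipated.
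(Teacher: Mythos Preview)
Your proof is correct, and it takes a genuinely different route from the paper's. The paper argues by induction on $|V|$: it removes a terminal vertex $u$, splits into cases according to whether the remaining undirected graph is connected, applies the induction hypothesis to the smaller pieces, and then uses Lemma~\ref{l:neutral-loop} on a cycle through $u$ to show that the value $\delta_u$ is forced consistently by all edges incident to $u$. Your approach instead fixes a spanning tree, propagates $\delta$ along it, and verifies each chord via its fundamental cycle; this is the standard ``integrate a closed $1$-cochain'' argument you allude to. Both proofs ultimately appeal to Lemma~\ref{l:neutral-loop} at the single nontrivial step. Your version is shorter and avoids the connectivity case split; the paper's induction has the minor virtue of never explicitly invoking the existence of a spanning tree, but that is hardly an advantage here.
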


The proof of Proposition~\ref{p:mainGraph}
uses the following lemma.

\begin{lemma}
 \label{l:removeone}
  Suppose $G=(V,\Ep)$ is a dag, $P=\{\Ep_1,\dots,\Ep_m\}$
 is a Reinhardt partition of $G,$ and $u\in V.$
 The directed graph $(W,\Fp)$ 
 obtained by removing the vertex $u$ (and all edges
 containing $u$) from $G$ is acyclic and moreover
 the partition $Q= \{\Fp_1,\dots,\Fp_m\},$
 where $\Fp_j = \Ep_j\cap \Fp,$ is a Reinhardt partition.
\end{lemma}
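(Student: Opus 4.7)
The plan is to verify the two claims separately, using that subgraphs inherit the relevant structural properties.

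First I would address acyclicity of $(W,\Fp)$. Any directed cycle in $(W,\Fp)$ uses only vertices of $W\subseteq V$ and edges of $\Fp\subseteq \Ep$, so it is also a directed cycle in $(V,\Ep)$. Since $G$ is a dag, no such cycle exists, and hence $(W,\Fp)$ is acyclic.

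Next I would check that $Q$ is a partition of $\Fp$. By construction $\Fp_j=\Ep_j\cap \Fp$, so the $\Fp_j$ are pairwise disjoint (inheriting disjointness from the $\Ep_j$), and their union is $(\bigcup_j \Ep_j)\cap \Fp = \Ep\cap \Fp = \Fp$, as required.

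Finally I would verify the Reinhardt property. Let $L=(v_0,v_1,\dots,v_n,v_{n+1}=v_0)$ be any cycle in $(W,F)$, where $F=\Fp\cup\Fm$. Since $W\subseteq V$ and $F\subseteq E$, the cycle $L$ is also a cycle in $(V,E)$, and therefore $P$-neutral by hypothesis. For each $1\le j\le m$, an edge of $L$ lies in $\Fp_j$ if and only if it lies in $\Ep_j$ (since every edge of $L$ already lies in $\Fp$), and similarly for $\Fm_j$ versus $\Em_j$. Thus the $\Fp_j$-edge count of $L$ equals its $\Ep_j$-edge count, and likewise on the minus side; equality of these counts (which is what $P$-neutrality provides) therefore gives $Q$-neutrality of $L$. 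Hence every cycle in $(W,F)$ is $Q$-neutral, so $Q$ is Reinhardt.

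There is no real obstacle here; the argument is essentially bookkeeping, hinging on the fact that cycles and edge-class memberships are preserved under passing to an induced subgraph. The one place to be slightly careful is to confirm that the definition of $\Fm_j$ implicit in the statement agrees with $\{(w,v):(v,w)\in\Fp_j\}$, which is immediate from the definitions of $\Em$ and $\Fp_j=\Ep_j\cap\Fp$.
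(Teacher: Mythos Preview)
Your proof is correct and follows essentially the same approach as the paper's: both reduce to the observation that directed cycles (resp.\ undirected cycles) in the induced subgraph are already directed cycles (resp.\ undirected cycles) in $G$, so acyclicity and $P$-neutrality pass down. Your version is simply more explicit, in particular verifying that $Q$ is a partition and that $P$-neutrality of a cycle in $(W,F)$ translates to $Q$-neutrality, points the paper leaves implicit.
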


\begin{proof}
 Any directed cycle in $(W,\Fp)$ induces a directed
 cycle in $(V,\Ep).$ Likewise, any  cycle
 in $(W,\Fp\cup\Fm)$  is a cycle in $(V,\Ep\cup \Em)=(V,E).$
\end{proof}

\begin{proof}[Proof of Proposition \ref{p:mainGraph}]
 We argue by induction on the number of vertices of $G=(V,\Ep).$
 In the case $|V|=1$ there is nothing to prove. Now suppose
 $G=(V,\Ep)$ is a dag,  $P=\{E_1^+,\dots,E_m^+\}$ is
 a Reinhardt partition, $\gamma\in\mathbb{T}^{m},$ 
 the cardinality of $V$ is at least two 
 and, for every dag  $(W,\Fp)$ with $|W|<|V|,$
 every $n,$  every Reinhardt partition $\{\Fp_1,\dots,\Fp_n\}$
 of $\Fp,$ and every $\Gamma\in \mathbb{T}^n,$ there
 exists $\delta:W\to \mathbb{T}$ such that, for
 each $1\le s\le n$ and each $(v,w)\in \Fp_s,$ 
\begin{equation}
 \label{e:deltas}
  \delta_v = \Gamma_s \delta_w.
\end{equation}

 Since $G$ is a dag, there is a terminal vertex $u;$  that
 is, there does not exist a vertex $w$ such that $(u,w)\in \Ep.$
 Let $(W,F)$ denote the graph obtained by removing
 the vertex $u$ from $(V,E).$  Let $\Fp=F\cap \Ep$ 
 and $\Fm=F\cap \Em.$ 

 We first consider the case that $(W,F)$ is not connected.
 Thus $(W,F)$ is a disjoint union of $(Y_1,H_1)$ and  $(Y_2,H_2)$
 and  $Y_1\ne \varnothing \ne Y_2.$ 
 Consider the graphs $(W_\jj,F_\jj)$ obtained by adding $u$ back 
 into each of $(Y_\jj,H_\jj).$ Thus $W_\jj=Y_\jj\cup\{u\}$ and
\[
 F_\jj=H_\jj \cup \{(v,u)\in W_\jj\times W_\jj: (v,u)\in E\} 
 = H_\jj \cup \{(v,u)\in W_\jj\times W_\jj: (v,u)\in \Ep\}.
\]

 Note that $|W_\jj|<|V|$
 and, by Lemma~\ref{l:removeone}, $(W_\jj,\Fp_\jj)$
 is a dag and the (induced)
 partitions $Q^\jj=(F_\jj \cap \Ep_1,F_\jj\cap \Ep_2, \dots,F_\jj\cap \Ep_m)$
 are Reinhardt.    Applying the induction
 hypothesis to each produces 
 $\delta^k:W_k\to\mathbb{T}$ such that,
 for $(v,w)\in F_k \cap \Ep_j,$
\[
  \delta^k_v = \gamma_j \delta^k_w.
\]
Further,  in each case 
 we choose, without loss of generality, $\delta^k_u=1.$ 
 In this case, $\delta:V\to \mathbb{T}$ defined by
 $\delta_v=\delta^k_v$ for $v\in W_k$  (is well defined and) 
 satisfies equation~\eqref{e:deltaV}. 

 From here on, it is assumed that $(W,F)$ is connected. 
 From Lemma~\ref{l:removeone}, $(W,\Fp)$ is a dag
 and the partition
 $Q=\{\Fp_1,\dots,\Fp_m\}$ of $\Fp$ induced from $P$ is Reinhardt.  
 From the induction hypothesis, there is a $\delta:W\to \mathbb{T}$
 satisfying equation~\eqref{e:deltas}.  
 To complete the
 proof, it suffices to identify a $\delta_u\in\mathbb{T}$
 such that the function $\rho:V\to \mathbb{T}$
 defined by $\rho_v=\delta_v$ for $v\in W$ and $\rho_u =\delta_u$
 satisfies %
 equation~\eqref{e:deltas}.
 Thus, it suffices to show, if
 $(x,u)\in \Ep_a$ and $(y,u)\in \Ep_b,$ then
 $\delta_x \gamma_a^* = \delta_y \gamma_b^*,$
 since then $\delta_u=\delta_x \gamma_a^*$ is 
 independent of $x\in U=\{w\in V: (w,u)\in \Ep\}.$

 Since $(W,F)$ is connected, 
 there is a cycle $(u=v_0,x=v_1,v_2,\dots,y=v_n,u=v_{n+1})$ in $(V,E)$
 with   $(u,v_1)\in \Em_a$ and $(v_n,u)\in \Ep_b.$ 
 By hypothesis, $L$ is $P$-neutral. By
 Lemma~\ref{l:neutral-loop}, 
\begin{equation}
 \label{e:sig0n}
 \sigma_0 \, \sigma_1\, \sigma_2 \, \cdots \, \sigma_n = 1,
\end{equation}
 where $\sigma_j=\gamma_{s_j}$ if $(v_{j_1},v_{j+1})\in \Ep_{s_j}$
 and $\sigma_j = \gamma_{s_j}^*$ if $(v_{j},v_{j+1})\in \Ep_{s_j}.$
 Observe that $\sigma_0 = \gamma_a^*$ %
 and  $\sigma_n =\gamma_b.$ For each $j,$ either
\begin{enumerate}[(i)]
 \item  there is an 
 $1\le s\le m$ such that  $(v_j,v_{j+1})\in \Ep_s,$ 
 in which case $\delta_{v_j} \delta_{v_{j+1}}^* =\gamma_s= \sigma_j;$ or
 \item there is an $1\le s\le m$ such that $(v_{j},v_{j+1})\in \Em_s,$
  in which case $(v_{j+1},v_{j})\in \Ep_s$ and hence
  $\delta_{v_{j+1}}\delta_{v_{j}}^* = \gamma_s =\sigma_j^*.$
\end{enumerate}
 Hence, for each $1\le j \le n$, 
\begin{equation}
 \label{e:nomatter}
 \delta_{v_j}\delta_{v_{j+1}}^* = \sigma_{j}.
\end{equation}
 Combining equations~\eqref{e:nomatter} and \eqref{e:sig0n} 
 and using $\sigma_0=\gamma_a^*$ and $\sigma_n=\gamma_b,$
 gives,
\[
 \gamma_a \gamma_b^* = \sigma_0^* \sigma_n^* 
 = \sigma_1\, \sigma_2 \, \cdots \, \sigma_{n-1} 
   = \delta_{v_1}\delta_{v_{n}}^*. 
\] 
Equivalently, $\delta_x \gamma_a^* = \delta_{v_1} \gamma_a^* 
= \delta_{v_{n}} \gamma_b^*=\delta_y \gamma_b^*$
 as claimed.   
\end{proof}

\section{The structure of free Reinhardt LMI domains}
\label{s:FRgraph}
A tuple $A \in M_d(\C)^g$ is a \df{minimal defining tuple} for a free
 spectrahedron $\cD$ if $\cD = \cD_A$ and
 $B \in M_{d^\prime}(\C)^g$ satisfies $\cD=\cD_B,$ then 
 $d^\prime \geq d.$

\begin{theorem}
\label{t:graph}
Suppose $A\in M_d(\C)^{\vg}$ is a  minimal defining tuple for $\cD_A$ and each $A_s\ne 0.$

 The free spectrahedron $\cD_A$ is
 Reinhardt %
 if and only if there is an
 orthogonal  decomposition  $\C^d=\oplus_{\ell=1}^\mm  \cH_\ell$ such that,
 writing 
\[
 A_s = \begin{pmatrix} A_s(j,k) \end{pmatrix}_{j,k=1}^\mm
\]
 with respect to this decomposition and 
 letting 
\begin{enumerate}[(i)] 
 \item  $V=\{1,2,\dots,\mm\};$
 \item  $\Ip_s=\{(j,k)\in V\times V : A_s(j,k)\ne 0\}$
   for $1\le s\le \vg;$
 \item $E^+=\cup \Ip_s,$
\end{enumerate}
 the graph  $G=(V,E^+)$ is a dag  for which
 $\{\Ip_1,\dots,\Ip_\vg\}$ is a Reinhardt partition. 
\end{theorem}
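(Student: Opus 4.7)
My plan is to handle the two implications separately: Proposition~\ref{p:mainGraph} does all the work for sufficiency, while the necessity direction is built around a weight-space decomposition induced by a $\mathbb{T}^{\vg}$-action on $\C^d$.

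\textbf{Sufficiency.} Given the graph-theoretic data, I fix $\gamma\in\mathbb{T}^{\vg}$ and feed the Reinhardt partition into Proposition~\ref{p:mainGraph} to produce $\delta:V\to\mathbb{T}$ with $\delta_v=\gamma_s\delta_w$ whenever $(v,w)\in\Ep_s$. Setting $U=\bigoplus_{\ell=1}^m\delta_\ell I_{\cH_\ell}$, a block computation shows that $(UA_sU^*)(j,k)=\delta_j\overline{\delta_k}A_s(j,k)$ equals $\gamma_s A_s(j,k)$ on every nonzero block (by the choice of $\delta$) and $0=\gamma_s\cdot 0$ otherwise, so $UA_sU^*=\gamma_s A_s$ for each $s$. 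Consequently $L_A(\gamma\cdot X)=(U\otimes I_n)L_A(X)(U\otimes I_n)^*$, and $\gamma\cdot X\in\cD_A$ whenever $X\in\cD_A$.

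\textbf{Necessity.} The identity $L_A(\gamma\cdot X)=L_{\gamma\cdot A}(X)$ together with the Reinhardt hypothesis yields $\cD_{\gamma\cdot A}=\cD_A$; since $\gamma\cdot A$ has the same dimension as $A$ and $A$ is minimal, $\gamma\cdot A$ is itself a minimal defining tuple of $\cD_A$. The standard uniqueness of minimal defining tuples up to unitary equivalence then produces, for each $\gamma$, a unitary $U_\gamma\in U(d)$ with $U_\gamma A_s U_\gamma^*=\gamma_s A_s$ for all $s$. The principal obstacle is to promote the pointwise family $\{U_\gamma\}$ to a genuine continuous unitary representation $\rho:\mathbb{T}^{\vg}\to U(d)$ implementing the same intertwining. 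I plan to do this inside the compact Lie group
\[
\Gamma=\{(\gamma,W)\in\mathbb{T}^{\vg}\times U(d):WA_sW^*=\gamma_sA_s,\ s=1,\dots,\vg\},
\]
which is closed and sits in a short exact sequence $1\to K\to\Gamma\to\mathbb{T}^{\vg}\to 1$, where $K$ is the connected unitary group of the commutant of $\{A_s,A_s^*\}$. Choosing a maximal torus $T$ of the identity component of $\Gamma$ — which must surject onto $\mathbb{T}^{\vg}$ since maximal tori map onto maximal tori under surjective maps of compact connected Lie groups — and splitting the surjection $T\to\mathbb{T}^{\vg}$ of tori by lifting a complement of its kernel at the Lie-algebra level, I obtain a continuous section $\mathbb{T}^{\vg}\to\Gamma$ whose second projection is the desired representation $\rho$.

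With $\rho$ in hand the remainder is routine. Decompose $\C^d=\bigoplus_\chi V_\chi$ into joint weight spaces for the torus, indexed by characters $\chi\in\widehat{\mathbb{T}^{\vg}}=\mathbb{Z}^{\vg}$, and let $\cH_1,\dots,\cH_m$ be the nonzero weight spaces with associated characters $\chi_1,\dots,\chi_m$. The intertwining forces $A_s V_\chi\subseteq V_{\chi+e_s}$, where $e_s$ is a standard basis vector of $\mathbb{Z}^{\vg}$, so every nonzero block $A_s(j,k)$ satisfies $\chi_j-\chi_k=e_s$; in particular the $\Ep_s$ are pairwise disjoint and partition $E^+$. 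A directed cycle in $G=(V,E^+)$ would force $\sum_i e_{s_i}=0$ in $\mathbb{Z}^{\vg}$, which is impossible, so $G$ is a dag. For the Reinhardt partition property, telescoping $\sum_j(\chi_{v_j}-\chi_{v_{j+1}})$ around any cycle in $(V,E)$ yields $\sum_s(|\text{forward }\Ep_s\text{-edges}|-|\text{forward }\Em_s\text{-edges}|)e_s=0$, and linear independence of the $e_s$ forces these counts to coincide for each $s$, which is exactly $P$-neutrality.
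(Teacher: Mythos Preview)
Your sufficiency argument is essentially identical to the paper's. For necessity, though, the paper takes a considerably more elementary route: rather than building a full continuous representation $\rho:\mathbb{T}^{\vg}\to U(d)$, it fixes a \emph{single} independent $\gamma\in\mathbb{T}^{\vg}$ (one whose coordinates satisfy no nontrivial monomial relation), applies the Linear Gleichstellensatz once to obtain a single unitary $U$ with $UAU^*=\gamma\cdot A$, and diagonalizes that $U$ to produce the decomposition $\C^d=\bigoplus_\ell\cH_\ell$ into its eigenspaces with eigenvalues $\lambda_\ell$. The relation $\lambda_j A_s(j,k)\overline{\lambda_k}=\gamma_s A_s(j,k)$ then plays exactly the role your weight condition $\chi_j-\chi_k=e_s$ plays, and the graph properties (disjointness of the $\Ip_s$, acyclicity, $P$-neutrality of every cycle) are read off from Lemma~\ref{l:neutral-loop}~(iii)$\Rightarrow$(i) using precisely the independence of the chosen $\gamma$. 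Your approach is more structural---the $\mathbb{Z}^{\vg}$-labelling of weight spaces makes acyclicity and neutrality immediate without any auxiliary combinatorial lemma---but it imports nontrivial compact-Lie-group machinery (surjectivity of $\Gamma_0\to\mathbb{T}^{\vg}$, images of maximal tori under surjections, splitting of torus surjections via the lattice). The paper trades all of that for one well-chosen generic point and Lemma~\ref{l:neutral-loop}, keeping the argument within elementary linear algebra.
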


\begin{remark}\rm
\label{r:connected}
 If the tuple  $A\in M_d(\C)^{\gv}$ has no reducing subspaces,
 then the graph $G$ is connected. The converse does not hold.
\end{remark}

\begin{proof}[Proof of Theorem~\ref{t:graph}]
 First suppose a decomposition $\C^d = \oplus_{\ell=1}^\mm \cH_\ell$
  with the stated properties exists.
 Hence, 
 given $\gamma\in\mathbb T^{\vg},$ 
  there exists $\delta:V\to \mathbb{T}$ such that,
 for each $1\le s\le \mm$ and each  $(j,k)\in \Ip_s,$
\[
 \delta_j = \gamma_s \delta_k,
\]
 by Proposition~\ref{p:mainGraph}. Let $I_\ell$ denote the 
 identity on $\cH_\ell$ and $U$ the diagonal matrix
 $\oplus_{\ell=1}^\mm \delta_\ell I_\ell.$ Given $1\le s\le \vg,$
 and $1\le j,k\le \mm,$
\[
 (U A_s U^*)(j,k) =\delta_j A_s(j,k)\delta_k^* 
  =\begin{cases} \gamma_s A_s(j,k) & \ \ (j,k)\in \Ip_s \\
    0 & \ \ (j,k)\notin \Ip_s. \end{cases}
\]
 Hence $U A_s U^* = \gamma_s A_s$ and $U AU^* =\gamma\bcdot A.$
  Thus, for $X\in \matng,$
\[
 \Lambda_A(\gamma\bcdot X) =  \sum_{s=1}^{\vg} A_s \otimes \gamma_s X_s 
   = \sum_{s=1} \gamma_s A_s \otimes X_s = \Lambda_{\gamma \bcdot A}(X)
  = (U\otimes I) \Lambda_A(X) (U\otimes I)^*
\]
 and therefore $X\in \cD_A$ if and only if $\gamma\bcdot X\in \cD_A.$
 Hence $\cD_A$ is Reinhardt. 

  Now suppose $\cD_A$ is Reinhardt and  $A$ is minimal for $\cD_A.$
Choose $\gamma\in \mathbb{T}^{\vg}$ independent as defined
 immediately before Lemma~\ref{l:neutral-loop}.
 Observe that $\cD_A=\cD_{\gamma\cdot A}$ by 
 the Reinhardt assumption.  Thus, by the 
 Linear Gleichstellensatz \cite[Proposition~2.2]{circular}, there exists 
  a $d\times d$ unitary matrix $U$ such that
\[
 UAU^* =  \gamma\bcdot A.
\]
It can be assumed, without loss of generality, that 
\begin{equation}
\label{e:U}
U = \lambda_1 I_{n_1}\oplus \lambda_2 I_{n_2} \oplus \cdots \oplus
\lambda_{\mm} I_{n_\mm},
\end{equation}
 where the $\lambda_j$ are distinct unimodular
  numbers,  the $n_j$ are positive integers
 and  $I_{n_j}$ is the identity matrix on $\C^{n_j}.$
 With respect to the block decomposition of $U,$ write
\[
    A_s = (A_s(j,l))_{j,l=1}^\mm.
\]
 Thus, for each $1\le j,l \le \mm$ and each $1\le s\le \vg,$
\[
  \lambda_j A_s(j, l) \lambda_l^*  = \gamma_s A_s(j,l),
\]
 and in particular, if $A_s(j,l)\ne 0,$ then 
 $\lambda_j = \gamma_s\lambda_l.$ 

 Let $V=\{1,\dots,\mm\}$ and let
 $\Ip_s =\{(v,w)\in V\times V: A_s(v,w)\ne 0\}$ for $1\le s\le \vg.$
 Thus, if $(v,w)\in \Ip_s,$ then 
\begin{equation}
 \label{e:gamma-lambda}
\lambda_v = \gamma_s \lambda_w.
\end{equation}
 Let $\Gp$ denote the directed graph with vertices $V$
 and edges $\Ep =\cup_{s=1}^{\vg} \Ip_s.$  As usual, let $\Em=\{(w,v):(v,w)\in E\}$
 and $E=\Ep\cup\Em.$

We claim
\begin{enumerate}
  \item \label{i:c1} $\Ip_s\cap \Ip_t=\emptyset$ for $s\ne t.$
       Thus $J=\{\Ip_1,\dots,\Ip_\vg\}$ is a partition of $\Ep.$
  \item \label{i:c2} If $(v,w)\in \Ep,$ then $(w,v)\notin \Ep.$ Hence
    $\Ep\cap \Em = \varnothing.$
    In particular, $G=(V,\Ep)$ has no self cycles.
  \item  \label{i:c3}  If $L$ is a cycle in $(V,E),$  then $L$ is
    $J$-neutral. In particular, $\Gp=(V,\Ep)$ is a dag
    and $J$ is a Reinhardt partition of $\Gp.$
\end{enumerate}

 To prove item~\eqref{i:c1}, suppose $(v,w) \in \Ip_s \cap \Ip_t.$ 
 From equation~\eqref{e:gamma-lambda} 
\[
\lambda_w \gamma_s = \lambda_v = \lambda_w \gamma_t.
\]
Hence  $s = t.$ Thus, if $s\ne t,$ then 
 $\Ip_s\cap \Ip_t=\varnothing.$

 Note that item~\eqref{i:c2} is a special case of item~\eqref{i:c3},
 but it is convenient to prove item~\eqref{i:c2} separately to simplify the
 logic of the statement of item~\eqref{i:c3}.  If $v,w\in V$
 are distinct and $(v,w),(w,v)\in \Ep,$ then there exist 
 $1\le s,t\le \vg$ with $(v,w)\in \Ip_s$ and $(w,v)\in \Ip_t.$ It follows
 that $\lambda_v = \gamma_s \lambda_w$ and $\lambda_w=\gamma_t \lambda_v.$
 Hence, $\gamma_s \gamma_t =1,$ contradicting the choice
 of $\gamma.$

 Turning to the proof of item~\eqref{i:c3},
 suppose  $L=(v_0,v_1, v_2, \dots, v_n,v_{n+1}=v_0)$ is a cycle
 in $(V,E).$ 
   Thus, there exists 
$s_0, \dots, s_n$ such that either 
 $(v_k,v_{k+1})\in \Ip_{s_k}$ or $(v_{k+1},v_k)\in\Ip_{s_k}$ 
 for $0\le k\le n.$  In the first case, let $\sigma_k=\gamma_{s_k}$
and in the second, let $\sigma_k = \gamma_{s_k}^*.$ In either case,
 equation (\ref{e:gamma-lambda}) implies
\[
 \lambda_{v_k} = \sigma_k  \lambda_{v_{k+1}},
\]
 for $0\le k\le n.$
Therefore, $\sigma_0 \sigma_1 \cdots \sigma_n = 1.$ Hence, 
by Lemma~\ref{l:neutral-loop}, 
  $L$ is $J$-neutral. It follows that $J$
 is a Reinhardt partition of $G.$  Since each cycle in the graph $(V,E)$ 
 is  $J$-neutral, it also follows that $G$ is acyclic as a directed graph,
 completing the proof of the theorem.
\end{proof}

\begin{remark}\rm
 The argument above leads to a conceptually simpler proof of
 \cite[Theorem~1.3]{circular}
 on circular spectrahedra that drops the irreducibility
 hypothesis. Namely, in that case
 one argues that the graph is a disjoint union of
 directed paths  by showing each vertex
 has at most one outgoing edge and one incoming edge.

 The statement of the result is: If $\cD_A$ is circular and 
 $A$ is minimal for $\cD_A,$ then, with respect
 to an appropriate orthogonal decomposition,
\begin{equation}
\label{e:formA}
  A_s =\begin{pmatrix} 0& A_s(1) &  0 &0 & \cdots &0 \\
    0&0& A_s(2)&0 & \cdots & 0\\
  \vdots & \vdots & \ddots & \ddots & \cdots & \vdots \\
 0&0&0&0 & \cdots & A_s(m-1) \\
0&0&0&0&\cdots&0 \end{pmatrix}.    
\end{equation}

\begin{proof}
 Just as in \cite{circular}, choose a $\theta$
 relatively prime to $2\pi,$ set $\gamma=\exp(i\theta)$ and 
 obtain a unitary $U$ such that $U A U^* = \gamma\, A.$
 Express this unitary $U$ as in equation~\eqref{e:U}.
 Express the $A_s$ accordingly.  In this case
 the analog of equation~\eqref{e:deltas}, is 
\[
 \lambda_v = \gamma \lambda_w,
\]
 if there is an $s$ such that $A_s(v,w)\ne 0.$ 
 Let $V=\{1,\dots,m\},$ let $\Ep$ denote the pairs $(v,w)$
 such that there is an $s$ such that $A_s(v,w)\ne 0$
 and consider the directed graph $G=(V,\Ep).$ 

  It is straightforward to prove that $G$ is a dag; that is,
 has no directed cycles. Indeed, if 
 $(\lambda_{v_0},\dots,\lambda_{v_{n+1}} =\lambda_{v_0})$
 is a (directed) cycle, then
\[
 \lambda_{v_0} = \gamma^n \lambda_{v_0}
\]
 and thus $\gamma^n=1,$ contradicting the choice of $\theta.$

   To show that $G$ is a disjoint union of  paths, 
 we first show each vertex has at most one incoming and
 one outgoing edge.  Indeed, if $(v,w),(v,u)\in \Ep,$ then, 
 by equation~\ref{e:gamma-lambda},
    \[
        \gamma \lambda_{w} = \lambda_v = \gamma \lambda_u.
    \]
    Thus, $w=u,$ since the $\lambda_j$ are distinct. Thus the
  vertex $v$ has at most one edge directed away from it.
  Similarly, if both $(w,v), (u,v) \in E,$ then, by equation~\ref{e:gamma-lambda},
    \[
        \lambda_{w} = \gamma \lambda_v = \lambda_u.
    \]
    Thus, $w=u$ and the vertex $v$ has at most one edge directed into it.

Now, let $P = (v_1, v_2, \dots, v_k)$ 
 be a maximal directed path  in $G.$  In
 particular, the $v_j$ are distinct and $(v_j,v_{j+1})\in \Ep$
 for $1\le j\le k-1.$   There are no additional edges
 between the vertices $\{v_1,\dots,v_k\}$ since
 already in the path $P$ the vertices $v_2,\dots,v_{k-1}$ 
 have degree two and if $(v_1,v_k)\in \Ep,$
 then $v_k$ has two edges directed into it; and if $(v_k,v_1)\in \Ep$
 then $G$ contains a directed cycle. Similarly, 
 if $u\in V\setminus\{v_1,\dots,v_k\},$ then neither
 $(u,v_1)$ nor $(v_k,u)$ are in $\Ep$
 by maximality of $P$; and neither $(v_1,u)$ nor $(u,v_k)$ are in $\Ep$
 since otherwise $v_1$ would have two edges directed away from
 it or $v_k$ would have two edges directed into it.
 Likewise for $2\le \ell \le k-1,$ neither $(u,v_\ell)$
 nor $(v_\ell,u)$ is in $\Ep$ as otherwise the total
 degree of the vertex $v_\ell$ would be at least three.   Hence
 $P$ determines a connected component of the 
 undirected graph $(V,E)$ and  is a maximal  directed
 path. 

 If $v$ is a vertex such that for all other vertices
 $u$ both  $(u,v)\notin \Ep$ and $(v,u)\notin  \Ep,$
 then $A_s(u,v)=0$ and $A_s(v,u)=0$ for all vertices $u.$
 It follows that $\cH_v\ne\{0\}$ reduces $A$ and moreover
 $A=0$ on $\cH_v,$ contradicting the minimality assumption on $A.$

 It now follows that each vertex appears in 
 exactly one  maximal directed
 path (of length at least two). Moreover, this path
 is a connected component of the undirected graph $(V,E).$
 Hence the undirected graph  $(V,E)$ is a union
 of directed paths, each of which is a connected component
 of $(V,E).$
  Finally, each connected component 
 component of the graph corresponds to a reducing subspace
 for $A.$ Indeed, for a component determined by 
 the directed path $P=(v_1,\dots,v_k),$ the subspace
 $\oplus_{j=1}^k \cH_{v_k}$ reduces $A$ and on 
 this subspace $A$ has the form of equation~\eqref{e:formA}.
\end{proof}
\end{remark}

\section{Proof of Theorem~\ref{t:mainR}}
\label{s:mainR}
 This section is devoted to the proof of Theorem~\ref{t:mainR}.
 Theorem~4.4 in  \cite{proper} says that says that
 a free bianalytic map between circular domains that
 sends $0$ to $0$ is linear. Correspondingly, much of the
  proof of Theorem~\ref{t:mainR} reduces to showing 
 the free bianalytic automorphism $\varphi$ satisfies
 $\varphi(0)=0.$  Evaluation of
 $\varphi$ at a nilpotent tuple involves only finitely
 many terms in its power series expansion.
The domain of $\varphi$
 naturally extends to include all nilpotent 
 tuples. 
 Since $\varphi$ is bianalytic, it
 maps nilpotent tuples in the boundary of $\fP_{\EE}$
 to tuples in the boundary of $\fP_{\EE}.$  See
 Subsection~\ref{ssec:power}. Subsection~\ref{ssec:boundaryfPE} provides a
 characterization of this boundary.
 The main result of Subsection~\ref{ssec:order2}, Lemma~\ref{l:eitheror},
 obtained by evaluating $\varphi$ on tuples in the boundary
 of $\fP_\EE$ on two nilpotent of order two, imposes
 serious restrictions on the relation between
 the coefficients in the affine linear portion (in the power
 series representation)  of $\varphi.$ These relations,
 combined with variations on the Caratheodory interpolation
 theorem developed in Subsection~\ref{ssec:Cint}, 
 are used in conjunction with evaluation of $\varphi$
 on tuples in the boundary of $\fP_\EE$ that
 are nilpotent of order three to complete the proof
 of Theorem~\ref{t:mainR} in  Subsection~\ref{ssec:order3}.

\subsection{Power series representations}
\label{ssec:power}
 Let $\xx$ \index{$\xx$} denote the 
  \df{words} in the freely noncommuting variables
 $x=\{x_1,x_2\}.$   
A \df{word}
  $w\in \xx$ is \df{evaluated} at a tuple $X\in\matng$ in the 
 evident way,
\begin{equation}
\label{e:word}
w=x_{j_1} x_{j_2} \cdots x_{j_m} \mapsto  w(X)=X^w = X_{j_1} X_{j_2} \cdots X_{j_m}.
\end{equation} \index{$w(X)$} \index{$X^w$}

 Given $f_w\in \CC$ for $w\in \xx,$ the (formal) expression, 
\[
 f(x) = \sum_{w\in \xx} f_w w,
\]
is a \df{power series}. Given a tuple $X\in \matntwo,$ 
\begin{equation*}
 f(X) = \sum_{m=0}^\infty  \left [\sum_{|w|=m} f_w X^w \right ],
\end{equation*}
 provided the sum (as indicated) converges.  Here
 $|w|$ denotes the \df{length} of the word $w.$  
 For instance the word of equation~\eqref{e:word}
 has length $m.$
 
 Given $\epsilon>0,$ let $\cN_\epsilon = (\cN_\epsilon[n])_n$
 denote the sequence of sets with
\[
 \cN_\epsilon[n] =\{X\in \matntwo: \|X_j\|<\epsilon\}.
\]
 The free set $\cN_\epsilon$ is a \df{neighborhood of zero}.
 Since $\fP_{\EE}$ contains a neighborhood of $0,$
 by results in \cite{KVV,billvolume}, 
 if $f:\fP_{\EE}\to M(\C)$ is a free function that is \df{bounded}
 (meaning there is an $R$ such that $\|f(X)\|<R$ for all $X\in \fP_{\EE}$),
 then $f$ admits a power series expansion that converges on
 some neighborhood of $0.$

 A tuple $T\in \mattwo$ is \df{nilpotent of order at most $N$} if $T^w=0$ 
 for each $w\in \xx$ with $|w|=N.$ 

\begin{lemma}\label{l:nilp-poly}
 If $f:\fP_{\EE}\to M(\C)$ is a bounded 
 free analytic function and $X\in \fP_{\EE}[n]$ is nilpotent
 of order at most  $N,$ 
 then $f_X:\mathbb D \to M_n(\C)$  defined by $f_X(z)= f(zX)$ is a polynomial
 of degree at most $N-1.$  Thus $f_X$ extends canonically
 to all of $\CC$ and hence the domain of $f$ naturally
 extends to contain all nilpotent tuples.
\end{lemma}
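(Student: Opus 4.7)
The plan is to use the power series representation of $f$ and exploit the nilpotency of $X$ to collapse the series to a polynomial in $z$, then use analytic continuation on the disk.

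First, since $f$ is a bounded free analytic function on $\fP_{\EE}$ and $\fP_{\EE}$ contains a neighborhood of $0$, the results of \cite{KVV,billvolume} quoted above give a power series expansion
\[
 f(Y) = \sum_{w\in \xx} f_w Y^w
\]
that converges on some $\cN_\epsilon.$ Next I would note that $\fP_{\EE}$ is convex at each level (it is the interior of a free spectrahedron) and circular (since $\EE_1,\EE_2$ are nilpotent of the same order, so $\fP_\EE$ is in fact Reinhardt). Combined with $0\in\fP_{\EE}$, this gives $zX\in \fP_{\EE}[n]$ for every $z\in\mathbb{D}$, so $f_X$ is genuinely defined and analytic on all of $\mathbb{D}$.

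Now pick $r>0$ small enough that $r\|X_j\|<\epsilon$ for $j=1,2,$ so that $zX\in \cN_\epsilon[n]$ for every $z$ with $|z|<r.$ On that disk the power series representation applies and
\[
 f_X(z) = f(zX) = \sum_{w\in \xx} f_w z^{|w|} X^w = \sum_{|w|<N} f_w z^{|w|} X^w,
\]
since by hypothesis $X^w=0$ whenever $|w|\ge N.$ The right hand side is a polynomial in $z$ of degree at most $N-1$ with coefficients in $M_n(\C),$ and it agrees with the analytic function $f_X$ on the disk $\{|z|<r\}.$ By the identity principle for analytic functions on $\mathbb{D},$ the equality extends to all of $\mathbb{D},$ completing the proof.

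I do not expect any real obstacle here: the only point requiring a brief justification is that $f_X$ is analytic on the whole of $\mathbb{D}$ (so that analytic continuation applies), and this is immediate from circularity and convexity of $\fP_{\EE}.$ The rest is just bookkeeping: substituting $zX$ into the free power series, observing that only finitely many words survive, and recognizing the result as a polynomial in the single complex variable $z.$
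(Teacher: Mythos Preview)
Your proof is correct and follows essentially the same route as the paper's: use the convergent power series on a small disk to see that $f_X$ is a polynomial of degree at most $N-1$ there, then extend to all of $\mathbb{D}$ by analytic continuation. The only difference is that you spell out why $f_X$ is defined and analytic on the whole of $\mathbb{D}$ (via circularity and convexity of $\fP_{\EE}$), whereas the paper simply says ``by analyticity.''
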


\begin{proof}
 For $z\in \CC$ with $|z|$ sufficiently small, it is evident that
  $f_X(z)=f(zX)$  is a polynomial (of degree at most $N-1$) by considering the convergent
 power series expansion for $f.$ By analyticity, $f_X(z)=f(zX)$
 for all $z\in \mathbb D.$ If  $X$ is an arbitrary nilpotent tuple (not necessarily in $\fP_{\EE}$), then
  there is a $\delta>0$ such that $\delta X\in \fP_{\EE}$
  and we define  $\varphi(X) = f_{\delta X}(\frac{1}{\delta}).$
\end{proof}

\subsection{The boundary of $\fP_{\EE}$}
\label{ssec:boundaryfPE}
 Let \df{$\varphi_j$} denote the coordinate functions of $\varphi.$ 
 Thus each $\varphi_j: \fP_{\EE}\to M(\C)$ is a bounded 
 free analytic mapping to which Lemma~\ref{l:nilp-poly}
 applies.

\begin{lemma}
 \label{l:deltodel}
 If $X$ is nilpotent and in the boundary of $\fP_{\EE},$  then 
 $\varphi(X)$ is in the boundary $\fP_{\EE}.$
\end{lemma}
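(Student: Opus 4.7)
The plan is a contradiction argument that leverages continuity of $\varphi^{-1}.$ Since $\fP_{\EE}$ is Reinhardt (in particular circular), matrix convex, and contains $0,$ a standard convexity argument shows each level $\fP_{\EE}[n]$ is balanced: $tZ\in\fP_{\EE}[n]$ for $|t|<1$ whenever $Z$ lies in the closure $\cD_{\EE}[n].$ Consequently a boundary point $X\in\partial\fP_{\EE}[n]$ is approachable from inside along the sequence $X_k:=(1-\tfrac{1}{k})X\in\fP_{\EE}[n].$ The value $\varphi(X)$ at the boundary is understood as the radial limit $\lim_{t\uparrow 1}\varphi(tX),$ which exists unambiguously in the cases of interest: when $X$ is nilpotent, $t\mapsto\varphi(tX)$ is a polynomial by Lemma~\ref{l:nilp-poly}, so $\varphi(X)=\varphi_X(1)$ is simply the evaluation at $1.$

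First I would note that $\fP_{\EE}$ is bounded: $L_{\EE}(X)\succeq 0$ forces $\Lambda_{\EE}(X)+\Lambda_{\EE}(X)^*\preceq I,$ and the Reinhardt condition applied to $iX$ controls both the Hermitian and anti-Hermitian parts of $\Lambda_{\EE}(X),$ hence $\|X_j\|$ since $\|C_j\|=1.$ Thus $Y_k:=\varphi(X_k)$ lies in the bounded set $\fP_{\EE}[n]$ and, after passing to a subsequence, converges to some $Y\in\cD_{\EE}[n];$ by the radial interpretation, $Y=\varphi(X).$ The heart of the proof is to rule out $Y\in\fP_{\EE}[n].$ Let $\psi:=\varphi^{-1},$ a free analytic, hence continuous, self-map of $\fP_{\EE}.$ If $Y$ were interior, continuity of $\psi$ at $Y$ would give
\[
 X=\lim_{k\to\infty} X_k=\lim_{k\to\infty}\psi(Y_k)=\psi(Y)\in\fP_{\EE}[n],
\]
contradicting $X\in\partial\fP_{\EE}[n].$ Therefore $Y\in\partial\fP_{\EE}[n],$ as claimed.

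The main obstacle I anticipate is not the contradiction itself but the rigorous interpretation of $\varphi$ at a boundary point: a priori different approximating sequences could produce different limits. This concern is defused by the observation that the argument above applies to every subsequential limit, so every such limit must lie on the boundary; moreover, for the nilpotent tuples that appear later in Section~\ref{s:mainR}, the polynomial representation of $t\mapsto\varphi(tX)$ collapses the ambiguity entirely, making $\varphi_X(1)$ the canonical extension. Once this interpretation is pinned down, the inverse-continuity step is essentially a one-line contradiction and no further analytic machinery is required.
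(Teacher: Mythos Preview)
Your argument is correct and follows essentially the same route as the paper: approach $X$ radially by $tX\in\fP_{\EE}[n]$ for $0<t<1,$ take $Y=\varphi(X)$ as the resulting limit in $\cD_{\EE}[n],$ and derive a contradiction from $Y\in\fP_{\EE}[n]$ by applying $\psi=\varphi^{-1}.$ The paper's proof is a terse three-line version of exactly this; your write-up is simply more careful about boundedness, subsequential limits, and the meaning of $\varphi(X)$ at the boundary.
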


\begin{proof}
 There is an $n$ such that $X$ is in the boundary of $\fP_{\EE}[n].$
 For $0<t<1,$ the tuple $tX$ is in
 $\fP_{\EE}[n]$ and hence  $Y:=\varphi(X)$ is in the closure
 of $\fP_{\EE}[n].$   On the other hand, the inverse $\psi$
 of $\varphi$ maps  $\fP_{\EE}$ into itself.
 Thus, if $Y\in \fP_{\EE}[n],$ then $\psi(Y)=X\in \fP_{\EE}[n].$ This
 contradiction shows $Y$ is in the boundary of $\fP_{\EE}[n].$
\end{proof}

\begin{lemma}
\label{l:inDA-alt}
 A tuple $(X_1,X_2)\in M_n(\C)^2$ is in  $\fP_{\EE}[n]$ if and only if
\[
 X_1^*X_1\otimes C_1^*C_1 + X_2X_2^* \otimes C_2C_2^* \prec I,
\]
 and is in the boundary of $\fP_{\EE}[n]$ if and only if 
\[
 X_1^*X_1\otimes C_1^*C_1 + X_2X_2^* \otimes C_2C_2^* \preceq I,
\]
 and there is a non-zero vector $\gamma\in \C^n\otimes \C^d $ such that
\[
 \left [X_1^*X_1\otimes C_1^*C_1 + X_2X_2^* \otimes C_2C_2^* \right]\, \gamma
 =\gamma.
\]
\end{lemma}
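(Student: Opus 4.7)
The plan is to compute $L_{\EE}(X)$ explicitly as a $3\times 3$ block operator matrix and reduce both the positivity conditions $L_{\EE}(X)\succ 0$ and $L_{\EE}(X)\succeq 0$ (together with singularity) to inequalities on a single operator via two successive Schur complements. Writing $\Lambda_{\EE}(X)$ in the $3\times 3$ block decomposition associated with $\EE_1$ and $\EE_2$ gives
\[
L_{\EE}(X) = \begin{pmatrix} I & -C_1\otimes X_1 & 0 \\ -C_1^*\otimes X_1^* & I & -C_2\otimes X_2 \\ 0 & -C_2^*\otimes X_2^* & I \end{pmatrix}.
\]

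Next I would Schur complement first against the $(1,1)$ block (which is the identity, hence positive definite), and then against the resulting bottom-right identity. The first step collapses $L_{\EE}(X)$ to a $2\times 2$ block operator matrix whose $(1,1)$ entry is $I - C_1^*C_1\otimes X_1^*X_1,$ with the $(2,2),\,(1,2),\,(2,1)$ entries unchanged. The second step then produces the single operator
\[
I - C_1^*C_1\otimes X_1^*X_1 - C_2 C_2^*\otimes X_2 X_2^*.
\]
Applying the canonical tensor flip $\C^d\otimes\C^n \to \C^n\otimes \C^d$ rewrites this as $I - X_1^*X_1\otimes C_1^*C_1 - X_2 X_2^*\otimes C_2 C_2^*,$ which yields the interior inequality directly.

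For the boundary statement, I would invoke Lemma~\ref{l:deltodel} together with the fact that $\fP_{\EE}$ has non-empty interior to identify the boundary of $\fP_{\EE}[n]$ as the set where $L_{\EE}(X)\succeq 0$ and $L_{\EE}(X)$ is singular. The first condition translates, by the same Schur complement reduction, to $X_1^*X_1\otimes C_1^*C_1 + X_2 X_2^*\otimes C_2 C_2^* \preceq I,$ and the second to the existence of a nonzero vector $\gamma\in\C^n\otimes\C^d$ solving the eigenvalue equation displayed in the lemma.

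The only point that needs care, and hence the main (minor) obstacle, is verifying that the Schur complement with positive-definite pivot preserves not only strict positivity but also non-strict positivity and the kernel dimension. This is transparent from the standard congruence factorization
\[
\begin{pmatrix} A & B \\ B^* & D \end{pmatrix}
= \begin{pmatrix} I & 0 \\ B^*A^{-1} & I \end{pmatrix}
  \begin{pmatrix} A & 0 \\ 0 & D - B^*A^{-1}B \end{pmatrix}
  \begin{pmatrix} I & A^{-1}B \\ 0 & I \end{pmatrix},
\]
valid for invertible $A,$ which realizes the Schur complement as congruence by an invertible matrix and hence preserves signature and rank at each step.
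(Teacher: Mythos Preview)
Your proposal is correct and is precisely the Schur complement argument the paper alludes to in its one-line proof. One minor correction: your appeal to Lemma~\ref{l:deltodel} for the boundary characterization is misplaced, since that lemma concerns the automorphism $\varphi$ rather than the structure of $\partial\fP_{\EE}[n]$; the fact you actually need (that the boundary of $\fP_{\EE}[n]$ is $\{X:L_{\EE}(X)\succeq 0 \text{ and singular}\}$) follows immediately from the definition of $\fP_{\EE}[n]$ together with the line-segment observation $L_{\EE}(tX)=tL_{\EE}(X)+(1-t)I\succ 0$ for $0\le t<1$, and requires no external lemma.
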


\begin{proof}
 A standard Schur complement argument proves the lemma.
\end{proof}

\begin{lemma}
\label{l:freethebidisk}
 The set $\fP_{\EE}$ is a subset of the free bidisk.  
 Moreover,
  if  $C_1^*C_1 +C_2C_2^*\preceq I,$ then $\fP_{\EE}$ is the free bidisk.
\end{lemma}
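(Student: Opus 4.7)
The plan is to run both parts of the lemma directly through the Schur complement characterization provided by Lemma~\ref{l:inDA-alt}, together with the normalization $\|C_1\|=\|C_2\|=1$ and elementary facts about positive tensor products.

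For the inclusion $\fP_{\EE}\subseteq$ free bidisk, I would fix $(X_1,X_2)\in \fP_{\EE}[n]$ and show $\|X_1\|<1$ (the argument for $X_2$ is symmetric). Let $v\in \C^m$ be a unit eigenvector of $C_1^*C_1$ with eigenvalue $\|C_1\|^2=1$, and let $u\in \C^n$ be a unit eigenvector of $X_1^*X_1$ with eigenvalue $\|X_1\|^2$. Then the tensor vector $u\otimes v$ is a unit vector and
\[
 (u\otimes v)^*\bigl[X_1^*X_1\otimes C_1^*C_1 + X_2X_2^*\otimes C_2C_2^*\bigr](u\otimes v) \;\ge\; \|X_1\|^2,
\]
because the first term evaluates to $\|X_1\|^2$ and the second is nonnegative. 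Since Lemma~\ref{l:inDA-alt} forces the operator in brackets to be $\prec I$, this forces $\|X_1\|^2<1$.

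For the converse under the hypothesis $C_1^*C_1+C_2C_2^*\preceq I$, I would start from $(X_1,X_2)$ in the free bidisk, so $\|X_j\|<1$, and pick $\varepsilon>0$ with $X_1^*X_1\preceq(1-\varepsilon)I$ and $X_2X_2^*\preceq(1-\varepsilon)I$. Using the standard fact that $A\preceq B$ and $C\succeq 0$ imply $A\otimes C\preceq B\otimes C$, I would estimate
\[
 X_1^*X_1\otimes C_1^*C_1 + X_2X_2^*\otimes C_2C_2^* \;\preceq\; (1-\varepsilon)\,I\otimes\bigl(C_1^*C_1+C_2C_2^*\bigr) \;\preceq\; (1-\varepsilon)\,I \;\prec\; I,
\]
and conclude $(X_1,X_2)\in \fP_{\EE}[n]$ by Lemma~\ref{l:inDA-alt}. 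Combined with the first half, this gives equality with the free bidisk.

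There is no real obstacle here; the only subtlety to flag is ensuring the strict inequality in the second step, which is why one must extract a definite $\varepsilon>0$ gap from $\|X_j\|<1$ before tensoring, rather than working from $X_j^*X_j\prec I$ directly.
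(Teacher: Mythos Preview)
Your proof is correct and follows essentially the same route as the paper: both parts are run through the Schur complement characterization of Lemma~\ref{l:inDA-alt} together with the normalization $\|C_j\|=1$. Your explicit eigenvector argument for the first inclusion and your $\varepsilon$-gap argument for the second are just careful expansions of what the paper records in one line each; in particular, your remark about needing the $\varepsilon$ to secure the strict inequality is well taken, since the paper's displayed chain $X_1^*X_1\otimes C_1^*C_1 + X_2X_2^*\otimes C_2C_2^* \prec I\otimes(C_1^*C_1+C_2C_2^*)\preceq I$ tacitly relies on exactly this point.
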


\begin{proof}
 Since $\|C_j\|=1$ for $j=1,2,$ it is immediate
 from Lemma~\ref{l:inDA-alt}  that 
 $\fP_{\EE}$ is contained in the free bidisk.  

 If also $C_1^*C_1 + C_2 C_2^*\preceq I,$ and $X$ is in the free bidisk,
 then 
\[
 X_1^*X_1 \otimes C_1^*C_1 + X_2X_2^*\otimes C_2C_2^*
\prec I \otimes C_1^*C_1 + I \otimes C_2C_2^* \preceq I.
\]
Thus, by Lemma~\ref{l:inDA-alt}, $X\in \fP_{\EE}.$
\end{proof}

\begin{assumption}
\label{a:notleI}
In view of Lemma~\ref{l:freethebidisk}, from here on it is assumed that $C_1^*C_1 + C_2C_2^* \not\preceq I.$
\end{assumption}

\subsection{Order two nilpotency}
\label{ssec:order2}
 In this section some information obtainable by evaluating
 $\varphi$ on order two nilpotent
  tuples  in the boundary of $\fP_{\EE}$ is collected.

 Let,  for $t=(t_1,t_2)\in\C^2,$  \index{$\tX(t)$}
\[
 \tX(t) := \left ( \begin{pmatrix} 0 & t_1\\ 0 &0\end{pmatrix},
 \,  \begin{pmatrix} 0 & t_2 \\0&0\end{pmatrix} \right ).
\]

\begin{lemma}
\label{l:tXt-in-boundary}
 The tuple $\tX(t)$ lies in $\fP_{\EE}[2]$ if and only if $|t_j|< 1$ 
 for each $j$ and it  is in the boundary
 of $\fP_{\EE}[2]$ if and only if $|t_j|\le 1$ and either $|t_1|=1$
 or $|t_2|=1.$
\end{lemma}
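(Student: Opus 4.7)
The plan is to reduce the question to Lemma~\ref{l:inDA-alt} by direct computation of the two matrices $X_1^*X_1$ and $X_2X_2^*$ appearing there for $X=\tX(t)$. Since $\tX(t)$ lives in $M_2(\C)^2$ and each $X_j$ has a single nonzero entry, the computation is immediate and yields
\[
X_1^*X_1 = \begin{pmatrix} 0 & 0 \\ 0 & |t_1|^2 \end{pmatrix}, \qquad
X_2X_2^* = \begin{pmatrix} |t_2|^2 & 0 \\ 0 & 0 \end{pmatrix}.
\]
Because these two rank-one matrices are supported on complementary diagonal entries, their Kronecker combination with $C_1^*C_1$ and $C_2C_2^*$ is block diagonal, namely
\[
X_1^*X_1\otimes C_1^*C_1 + X_2X_2^*\otimes C_2C_2^*
= \begin{pmatrix} |t_2|^2\, C_2C_2^* & 0 \\ 0 & |t_1|^2\, C_1^*C_1 \end{pmatrix}.
\]

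I would then read off both statements from this block form. For the interior characterization, the block-diagonal matrix is $\prec I$ if and only if each diagonal block is $\prec I$; since $\|C_j\|=1$ implies $\|C_jC_j^*\|=\|C_j^*C_j\|=1$, this happens exactly when $|t_j|^2<1$ for $j=1,2$. Applying the first part of Lemma~\ref{l:inDA-alt} gives the interior claim.

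For the boundary characterization, the same block structure shows that the matrix is $\preceq I$ iff $|t_j|\le 1$ for both $j$, so that condition is automatic from membership in the closure. The remaining issue is the existence of a nonzero fixed vector for the block-diagonal operator; by the block-diagonal form, such a fixed vector exists iff one of the blocks $|t_j|^2 C_jC_j^*$ or $|t_j|^2 C_j^*C_j$ has $1$ as an eigenvalue. Again because $\|C_j\|=1$, the largest eigenvalue of each block is exactly $|t_j|^2$, so this occurs iff $|t_1|=1$ or $|t_2|=1$. Invoking the second part of Lemma~\ref{l:inDA-alt} then completes the argument.

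There is really no serious obstacle here: the only thing to be careful about is the bookkeeping of which coordinate carries $C_j^*C_j$ versus $C_jC_j^*$, and the use of the normalization $\|C_j\|=1$ (fixed once and for all before Theorem~\ref{t:mainR}) to translate spectral radius statements into the required eigenvalue $1$ conditions.
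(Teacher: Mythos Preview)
Your proof is correct and follows exactly the paper's approach: compute the block-diagonal form of $X_1^*X_1\otimes C_1^*C_1 + X_2X_2^*\otimes C_2C_2^*$ and invoke Lemma~\ref{l:inDA-alt}, using the normalization $\|C_j\|=1$. The paper's version is simply terser, omitting the spelled-out eigenvalue discussion you provide.
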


\begin{proof}
 Observe 
\[
 C_1^*C_1\otimes \tX_1(t)^*\tX_1(t)
  + C_2 C_2^* \otimes \tX_2(t)\tX_2(t)^*
 = \begin{pmatrix} |t_2|^2 C_2 C_2^* & 0 \\ 0 & |t_1|^2 C_1^*C_1 
 \end{pmatrix}
\]
 and apply  Lemma~\ref{l:inDA-alt}.
\end{proof}

 Recall, since the domain of $\varphi$ contains a neighborhood of $0$
 and its codomain is bounded, $\varphi_j$ has a power series expansion,
\begin{equation}
\label{e:vphij}
 \varphi_j(x) = b_j +\sum_{k=1}^2 \ell_{j,k} x_k  
   + \sum_{|w|\ge 2} (\varphi_j)_w w
\end{equation}
 and is defined for nilpotent tuples, whether in $\fP_{\EE}$ or not. 
 In view of equation~\eqref{e:vphij},  
\begin{equation}
\label{e:vphijtX}
\varphi_j(\tX(t)) = \begin{pmatrix} b_j & \sum \ell_{j,k}t_k 
  \\ 0 & b_j \end{pmatrix}.
\end{equation}
 Let \index{$L$}
\begin{equation}
 \label{d:L}
 \altL= \begin{pmatrix} \ell_{1,1}&\ell_{1,2}\\ \ell_{2,1} & \ell_{2,2}
 \end{pmatrix}.
\end{equation}
Since $\varphi$ is an automorphism, the matrix $\altL$ is invertible.

\begin{lemma}
\label{l:eitheror}
  Either 
 \begin{enumerate}[(a)]
  \item \label{i:a}
    $\ell_{1,2}=0=\ell_{2,1}$ and $\ell_{1,1}\ne 0 \ne \ell_{2,2}$; or
  \item \label{i:b}
      $\ell_{1,2}\ne 0 \ne \ell_{2,1}$ and $\ell_{1,1} = 0 = \ell_{2,2}.$
 \end{enumerate}
  
 Moreover, in the first case
\begin{equation}
 \label{e:elljj}
 \|\begin{pmatrix} b_j & \ell_{j,j} \\ 0 & b_j \end{pmatrix} \| =1
\end{equation}
 and in the second case,
\begin{equation}
 \label{e:elljk}
 \| \begin{pmatrix} b_j & \ell_{j,k} \\ 0 & b_j \end{pmatrix}\|=1,
\end{equation}
 for $k\ne j.$
\end{lemma}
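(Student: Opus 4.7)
The plan is to evaluate $\varphi$ on the order-two nilpotent tuples $\tX(t) = (t_1 N, t_2 N)$, where $N = \begin{pmatrix} 0 & 1 \\ 0 & 0 \end{pmatrix}$, and exploit boundary-to-boundary preservation. Since $\tX(t)^w = 0$ for every word $w$ of length $\ge 2$, Lemma~\ref{l:nilp-poly} applied to the expansion \eqref{e:vphij} yields the exact formula
\[
\varphi_j(\tX(t)) = b_j I_2 + m_j(t)\, N, \qquad m_j(t) := \ell_{j,1} t_1 + \ell_{j,2} t_2.
\]
A direct computation of the singular values of $b_j I_2 + m N$ shows $\|b_j I_2 + m N\| \le 1$ is equivalent to $|m| \le 1 - |b_j|^2$. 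Consequently, the norm conditions \eqref{e:elljj} and \eqref{e:elljk} each reduce to the extremal equality $|\ell_{\cdot,\cdot}| = 1 - |b_j|^2$; moreover, since $\fP_\EE$ is contained in the free bidisk (Lemma~\ref{l:freethebidisk}), we obtain the preliminary bound $|\ell_{j,1}| + |\ell_{j,2}| \le 1 - |b_j|^2$.

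For the main step, fix $t = (e^{i\theta_1}, e^{i\theta_2}) \in \mathbb{T}^2$. By Lemma~\ref{l:tXt-in-boundary}, $\tX(t) \in \partial\fP_\EE$, so Lemma~\ref{l:deltodel} forces $\varphi(\tX(t)) \in \partial\fP_\EE$, and Lemma~\ref{l:inDA-alt} translates this into $\det(I - M(t)) = 0$, where $M(t) = \varphi_1(\tX(t))^* \varphi_1(\tX(t)) \otimes C_1^*C_1 + \varphi_2(\tX(t)) \varphi_2(\tX(t))^* \otimes C_2 C_2^*$. The key observation is that conjugating $M(t)$ by $\operatorname{diag}(I_m, e^{-i\alpha} I_m)$ implements the simultaneous phase shift $\theta_j \mapsto \theta_j + \alpha$; hence $\det(I - M(t))$ depends on $\theta_1, \theta_2$ only through $z := e^{i(\theta_1 - \theta_2)}$. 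Thus $\det(I - M(t)) = Q(z)$ for a Laurent polynomial $Q$ in $z$ vanishing on the unit circle, so $Q \equiv 0$; equating its Laurent coefficients to zero produces polynomial identities in the $\ell_{j,k}$, the $b_j$, and the entries of $C_1^*C_1, C_2 C_2^*$.

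The top-degree Laurent coefficient of $Q$ comes from the $|m_1(z)|^2|m_2(z)|^2$ contribution to $\det(I - M(t))$ (via Schur-complement expansion against the positive matrix $P := I - |b_1|^2 C_1^*C_1 - |b_2|^2 C_2 C_2^*$, which is invertible because $\varphi(0) = b \in \fP_\EE$). Using Assumption~\ref{a:notleI} to ensure the relevant Schur-complement determinant is nonzero, vanishing of this top coefficient reduces to $B_1 B_2 = 0$, where $B_j := \ell_{j,1} \overline{\ell_{j,2}}$. Hence $\ell_{1,1} \ell_{1,2} = 0$ or $\ell_{2,1} \ell_{2,2} = 0$, so at least one row of $L = (\ell_{j,k})$ contains a zero; combined with invertibility of $L$ (from $\varphi$ bianalytic), $L$ is diagonal, anti-diagonal, or strictly triangular. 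The $z^{\pm 1}$-coefficient identities, together with the preliminary bound $|\ell_{j,j}| \le 1 - |b_j|^2$, eliminate the strictly triangular possibilities: the relevant coefficient forces $|\ell_{j,j}|^2 = 1 - |b_j|^2$, which violates the bound whenever $b_j \ne 0$, and the residual $b_j = 0$ case is ruled out by applying the same analysis to $\varphi^{-1}$, whose preliminary bound on $L^{-1}$ forces the off-diagonal entry to vanish.

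This leaves only cases (a) and (b). Finally, to obtain the norm equality \eqref{e:elljj} in case (a), substitute $t = (1, 0)$ into $\det(I - M(t)) = 0$: a short Schur-complement computation yields $|\ell_{j,j}| = 1 - |b_j|^2$, which by the first paragraph is equivalent to the norm claim. Case (b) and \eqref{e:elljk} are handled symmetrically by $t = (0, 1)$. The principal technical obstacle is the careful identification of the top Laurent coefficient of $Q$ in the general matrix case: it requires tracking the interplay between the $m \times m$ block structure of $M(t)$ and the rank-one nilpotent structure of $N, N^*$, which can be carried out by first Schur-complementing the $(1,1)$-block $I - X = P - |m_2(z)|^2\, C_2 C_2^*$ and then expanding the resulting determinant as a polynomial in $|m_j|^2$.
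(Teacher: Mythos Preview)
Your determinant/Laurent-polynomial approach is genuinely different from the paper's, but it has a real gap at the step you yourself flag as the ``principal technical obstacle,'' and a second gap at the norm-equality step.

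\textbf{Top Laurent coefficient.} With $t=(z,1)$, each entry of $I-M(t)$ is a Laurent polynomial in $z$ of degree at most $1$, so $Q(z)=\det(I-M(t))$ has degree up to $2m$ (where $C_1^*C_1,\,C_2C_2^*\in M_m(\C)$), not degree~$2$. A direct computation of the $z^{2m}$ coefficient gives $B_1^{\,m}B_2^{\,m}\det(C_1^*C_1)\det(C_2C_2^*)$, which vanishes automatically whenever $C_1$ or $C_2$ fails to have full column/row rank (e.g.\ whenever $k<m$ or $n<m$). So the top coefficient being zero does not force $B_1B_2=0$, and your Schur-complement sketch against $P$ does not repair this: after factoring out $\det(P)^2$ you are still left with a $2m\times 2m$ determinant whose $z$-degree is governed by the ranks of $C_1^*C_1$ and $C_2C_2^*$, not by any quantity Assumption~\ref{a:notleI} controls. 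Identifying the \emph{first} nonvanishing coefficient in general leads to a case analysis you have not carried out.

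\textbf{Norm equalities.} The claim that substituting $t=(1,0)$ into $\det(I-M(t))=0$ and ``a short Schur-complement computation'' yields $|\ell_{1,1}|=1-|b_1|^2$ is not correct as stated. In case~(a) one has $\varphi_2(\tX(1,0))=b_2 I$, so $M(1,0)=\varphi_1^*\varphi_1\otimes C_1^*C_1+|b_2|^2 I\otimes C_2C_2^*$. The existence of \emph{some} eigenvector for eigenvalue~$1$ does not force $\|\varphi_1(\tX(1,0))\|=1$: because $C_1^*C_1+C_2C_2^*\not\preceq I$, one can have $\|M(1,0)\|=1$ with $\|\varphi_1(\tX(1,0))\|<1$. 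The paper's proof avoids this by first producing, via the averaging identity
\[
G=\tfrac12\bigl(F(1,1)+F(1,-1)\bigr)=F(1,0)+\text{(psd correction)},
\]
a \emph{specific} unit eigenvector $\gamma$ of $F(1,0)$ on which the correction term must vanish; that extra information is exactly what shows $C_2C_2^*$ annihilates the relevant components of $\gamma$, and hence that the $C_1^*C_1$-summand alone already has norm~$1$ on $\gamma$. Your determinant argument discards this positivity information. The elimination of the strictly triangular case has the same defect: the asserted identity $|\ell_{j,j}|^2=1-|b_j|^2$ is never derived, and the appeal to $\varphi^{-1}$ in the $b_j=0$ subcase is not fleshed out (the preliminary bound on $L^{-1}$ does not by itself kill the off-diagonal entry).
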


\begin{proof}
Let   
\[
  F(t)  =  \varphi_1(\tX(t))^* \varphi_1(\tX(t))\otimes C_1^*C_1 
   + \varphi_2(\tX(t))\varphi_2(\tX(t))^*    \otimes C_2 C_2^*.
\]
From equation~\eqref{e:vphijtX}, 
\[
\begin{split}
  F(t) %
  = & \begin{pmatrix} |b_1|^2  &  b_1^*(t_1\ell_{1,1}+t_2\ell_{1,2})\\
   b_1(t_1\ell_{1,1}+t_2\ell_{1,2})^*  & |b_1|^2 + |t_1\ell_{1,1}+t_2\ell_{1,2}|^2 
 \end{pmatrix} \otimes C_1^*C_1 \\
  & +  \begin{pmatrix} |b_2|^2 + |t_1 \ell_{2,1} + t_2\ell_{2,2}|^2 
   & b_2^* (t_1 \ell_{2,1} + t_2\ell_{2,2}) \\ b_2 (t_1 \ell_{2,1} + t_2\ell_{2,2})^*
    & |b_2|^2 \end{pmatrix} \otimes C_2 C_2^*.
\end{split}
\]

  By Lemma~\ref{l:inDA-alt},  $F(t)$ has norm at most one for all $-1\le t_j\le 1.$
 It follows that
\begin{equation}
\label{e:G}
 G= \frac{F(1,1)+F(1,-1)}{2}
\end{equation}
 has norm at most one. 
 On  the other hand,
\begin{equation}
\label{e:G-alt}
 G =F(1,0) + \begin{pmatrix} 0&0\\0&  |\ell_{1,2}|^2 \end{pmatrix} 
   \otimes C_1^*C_1 +   \begin{pmatrix}  |\ell_{2,2}|^2 & 0 \\0 &0 \end{pmatrix}
  \otimes C_2 C_2^*.
\end{equation}
 Since all the terms on the right hand side of
 equation~\eqref{e:G-alt} are positive semidefinite
 and since  $F(1,0)$ has norm one 
   (by Lemma~\ref{l:tXt-in-boundary}),  it follows that
 $G$ has norm at least one. Thus $G$ has norm one.

 Since $F(1,0)$ has norm one,   there is a vector $\gamma\in \C^2\otimes\C^d$ 
 such that $F(1,0)\gamma =\gamma.$ Write 
\[
 \gamma =\sum_{j=1}^2 e_j \otimes \gamma_j.
\]
 From positivity considerations,
\begin{equation}
\label{e:somethingsare0}
0 = \left ( \begin{pmatrix} 0&0\\0& |\ell_{1,2}|^2 \end{pmatrix} 
   \otimes C_1^*C_1 + \begin{pmatrix}  |\ell_{2,2}|^2 & 0 \\0 &0 \end{pmatrix}
 \otimes C_2C_2^* 
 \right ) \gamma
 = \begin{pmatrix}  |\ell_{2,2}|^2 C_2C_2^* \gamma_1 \\   |\ell_{1,2}|^2 
   C_1^*C_1\, \gamma_2 \end{pmatrix}.
\end{equation}
  Hence either $\ell_{2,2}=0$ or $C_2C_2^* \gamma_1=0$  and either
 $\ell_{1,2}=0$ or $C_1^*C_1\gamma_2=0.$ It is not the case that 
 $\ell_{1,2}=0=\ell_{2,2},$   since $\altL$ is invertible. 
 From equation \eqref{e:G}, 
\[
 1 = \langle G\gamma,\gamma\rangle = 
  \frac12 \left ( \langle F(1,1)\gamma,\gamma\rangle + \langle F(1,-1)\gamma,
  \gamma\rangle \right ) \\
\le  1.
\]
Hence $F(1,\pm 1)\gamma=\gamma$ too.  It follows that
\[
0=  (F(1,1)-F(1,0))\gamma  =
 \begin{pmatrix} \left [|\ell_{2,1}+\ell_{2,2}|^2 - |\ell_{2,1}|^2\right ] 
   C_2C_2^*\gamma_1 +  \left [b_1^* \ell_{1,2} C_1^*C_1  +   b_2^* \ell_{2,2} C_2C_2^* \right ] \gamma_2 \\
  \left[|\ell_{1,1}+\ell_{1,2}|^2-|\ell_{1,1}|^2\right ] C_1C_1^*\gamma_2
  + \left [b_1\ell_{1,2}^* C_1^*C_1 + b_2 \ell_{2,2}^* C_2C_2^* \right] \gamma_1
     \end{pmatrix}.
\]
In view of equation~\eqref{e:somethingsare0}, 
\begin{equation}
\label{e:somemorethings0}
0 = \left ( F(1,1)-F(1,0)\right ) \gamma
 = \begin{pmatrix}    b_2^* \ell_{2,2} C_2C_2^*  \gamma_2 \\
  b_1\ell_{1,2}^* C_1^*C_1  \gamma_1
     \end{pmatrix}.
\end{equation}
 If $\ell_{1,2}\ne 0 \ne\ell_{2,2},$ then,
 from equations~\eqref{e:somethingsare0} and 
 \eqref{e:somemorethings0}, 
 $C_1^*C_1 \gamma_2= 0=b_1C_1^*C_1 \gamma_1$
 and $C_2C_2^*\gamma_1=0=b_2 C_2C_2^*\gamma_2.$
From $\gamma = F(1,0)\gamma,$ we thus obtain
 the contradiction
\[
  \gamma_1 = \left [ |b_1|^2 C_1^* C_1 + \left (|b_2|^2 + |\ell_{2,1}|^2\right)
  \otimes C_2C_2^* \right ] \, \gamma_1 \, + \, 
  \left[ b_1^* \ell_{1,1} C_1^*C_1 + b_2^* \ell_{2,1} C_2 C_2^* \right ] \, \gamma_2
  =0 
\]
and
\[
\gamma_2 = \left [ \left(|b_1|^2 +|\ell_{1,1}|^2\right ) C_1^*C_1 \, +
   \, |b_2|^2 C_2 C_2^* \right ] \, \gamma_2 \, + \, 
    \left [  b_1\ell_{1,1}^* C_1^*C_1  + b_2\ell_{2,1}^* C_2C_2^* \right ]\,
     \gamma_1 =0.
\]
 Thus  exactly one of $\ell_{1,2}, \, \ell_{2,2}$ is $0.$

 If $\ell_{1,2}=0,$ then $\ell_{2,2}\ne 0$ and both $C_2^*\gamma_1=0$
 and $b_2C_2^* \gamma_2=0$ by equations~\eqref{e:somethingsare0} and 
 \eqref{e:somemorethings0}.
 It follows that
\[
 \varphi_2(\tX(1,0)) \varphi_2(\tX(1,0))^* \otimes C_2 C_2^* \gamma =0
\]
and therefore
\[
\gamma= F(1,0)\gamma=  \varphi_1(\tX(1,0))^* \varphi_1(\tX(1,0)) \otimes C_1^*C_1 \gamma.
\]
 Using the fact that $\| U\otimes V\|=\|U\|\, \|V\|$ for matrices
 $U$ and $V$ (where the norms are the operator norms), 
 $\|\varphi_1(\tX(1,0))\|\ge 1.$ On the other $F(1,0)\preceq I$
 implies $\|\varphi_1(\tX(1,0))\| \le 1.$  Thus $\|\varphi_1(\tX(1,0))\|=1.$
 Note that, in this case,
\[
 \varphi_1(\tX(1,0)) =\begin{pmatrix} b_1 & \ell_{1,1} \\ 0 & b_1\end{pmatrix}.
\]

 If instead $\ell_{2,2}=0$ and $\ell_{1,2}\ne 0,$ then $C_1 \gamma_2=0$
 and $b_1 C_1^*C_1\gamma_1=0.$ It follows that 
\[
 \varphi_1(\tX(1,0))^* \varphi_1(\tX(1,0)) \otimes C_1^*C_1 \gamma =0
\]
and hence 
\[
 \gamma = \varphi_2(\tX(1,0))\varphi_2(\tX(1,0))\otimes C_2 C_2^* \gamma.
\]
Thus 
\[
 \varphi_2(\tX(1,0)) = \begin{pmatrix} b_2 & \ell_{2,1} \\ 0 & b_2 \end{pmatrix}
\]
 has norm one.  %

Consider instead the contraction 
\[
 G = \frac{F(1,1)+F(-1,1)}{2} 
\]
 and note that 
\[
 G = F(0,1) + \begin{pmatrix} 0&0\\0& |\ell_{1,1}|^2 \end{pmatrix} \otimes
  C_1^*C_1 + \begin{pmatrix} |\ell_{2,1}|^2 &0\\0&0\end{pmatrix}
   \otimes C_2 C_2^*.
\]
 Since $F(0,1)$ is positive semidefinite and has norm one,
 and $G$ has norm at most one, there is a (non-zero)
 vector $\dd =\sum_{j=1}^2 e_j \otimes \dd_j$ such that
 $F(0,1) \dd = \dd$ and both $\ell_{1,1} C_1 \dd_2=0$
 and $\ell_{2,1} C_2^* \dd_1=0.$ In particular, 
 $G\dd=\dd.$ Before continuing, note that not  both $\ell_{1,1}=0$
 and $\ell_{2,1}=0.$  

 Recall, since $\altL$ is invertible, not both of $\ell_{1,1}$
 and $\ell_{2,1}$ is zero. To prove at least one is not zero,
 observe that, since both $F(1,1)$ and $F(-1,1)$ have norm one
 and $G\dd=\dd,$ that $F(\pm 1,1)\dd=\dd.$ Hence
\[
\begin{split}
 0 = & (F(1,1)-F(-1,1))\dd \\
 & = \begin{pmatrix}  \left [|\ell_{2,1}|^2 +\ell_{2,1}\ell_{2,2}^*
  +\ell_{2,1}^* \ell_{2,2} \right ] C_2C_2^*\dd_1 
 + \left[ b_1^* \ell_{1,1} C_1^*C_1 + b_2^* \ell_{2,1} C_2C_2^*
 \right ] \dd_2\\
\left [ b_1 \ell_{1,1}^* C_1^*C_1 + b_2\ell_{2,1}^* C_2C_2^* \right ] \dd_1 
   +  \left [ |\ell_{1,1}|^2 + \ell_{1,1}\ell_{1,2}^* 
  +\ell_{1,1}^* \ell_{1,2} \right ] C_1^*C_1\dd_2 \end{pmatrix}.    
\end{split}
\]
 Thus,
\[
 0 = \begin{pmatrix} b_2^* \ell_{2,1} C_2C_2^* \dd_2 \\
    b_1^* \ell_{1,1} C_1^*C_1 \dd_1 \end{pmatrix}.
\]
 If neither $\ell_{1,1}=0$ nor $\ell_{2,1}=0,$ then
 $b_2^*C_2^*\dd_2=0,$ $b_1^* C_1 \dd_1=0$ as
 well as $C_1\dd_2=0$ and $C_2^*\dd_1,$
 in which case $F(0,1)\dd =0,$ a contradiction.

 Now suppose $\ell_{1,2}=0$ and $\ell_{2,2}\ne 0.$
 In this case $\ell_{1,1}\ne 0$ and equation~\eqref{e:elljj}
 with $j=1$ holds.  Since $\ell_{1,1}\ne 0,$ it follows
 that $\ell_{2,1}=0$ and  $C_1\dd_2=0=b_1^*C_1\dd_1.$ Thus
 $\varphi_1(\tX(0,1))\otimes C_1 \dd =0$ 
 and consequently, 
\[
1 =\| \varphi_2(\tX(0,1))\| = \| \begin{pmatrix} b_2 & \ell_{2,2}\\ 
  0 & b_2 \end{pmatrix} \|.
\]

Finally, suppose $\ell_{1,2}\ne 0$ and $\ell_{2,2}=0.$
 In this case equation~\eqref{e:elljk} holds with
 $(j,k)=(2,1).$ In particular, $\ell_{2,1}\ne 0$
 and $\ell_{1,1}=0.$  It follows that 
 $\varphi_2(\tX(0,1)) \otimes C_2^* \delta = 0$ and hence
\[
 1=\| \varphi_1(\tX(0,1))\| = \| \begin{pmatrix} b_1 & \ell_{1,2}\\
   0 & b_1 \end{pmatrix}\|.
\]
The proof is complete.
\end{proof}

\subsection{Caratheodory interpolation}
\label{ssec:Cint}
 This section presents a version of the Caratheodory interpolation
 theorem, Lemma~\ref{l:Cinterpolate},  and a consequence,
 Lemma~\ref{l:constraints}, that are used  in Subsection~\ref{s:order3}
  below to analyze the 
 action of $\varphi$ on tuples nilpotent of order three.

\begin{lemma}
\label{l:Cinterpolate}
  Suppose $b,\ell\in \C$ and $|b|<1.$  If 
\[
 \begin{pmatrix} b & \ell \\ 0 & b \end{pmatrix}
\]
has norm one, then there is a $\theta$ such that  
  $\ell = e^{i\theta}(|b|^2-1).$

Let $e=e^{i\theta}(|b|^2-1).$ If $f\in \CC$ and 
\[
 T=  \begin{pmatrix} b & e  & f \\ 0&b& e \\ 0&0&b\end{pmatrix}
\]
has norm at most one, then $T$ has norm one and $f=e^{i\theta} b^* e.$
Further, %
$TT^*=I-aa^*$ where
\[
 a= \sqrt{1-|b|^2} \begin{pmatrix} c^2\\ c \\ 1 \end{pmatrix} 
\]
 and $c=e^{i\theta} b^*.$
Likewise
 $T^*T=I-aa^*$ where
\[
 a= \sqrt{1-|b|^2}\, 
  \begin{pmatrix}  1\\   c \\ c^2 \end{pmatrix},
\]
 and where $c=\emit b.$
 In both cases, $\|a\| = \sqrt{1-|b|^{6}}<1.$
\end{lemma}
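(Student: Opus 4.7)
The plan is to read every assertion of the lemma off of the defect operators $I-T_0^*T_0$ and $I-T^*T$; this is essentially a Caratheodory-style interpolation statement for $2\times 2$ and $3\times 3$ lower-triangular Toeplitz blocks, and in this small dimension everything reduces to direct linear algebra.

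For the first claim, I would compute
\[
 I-T_0^*T_0 \;=\; \begin{pmatrix} 1-|b|^2 & -\bar b\,\ell \\ -b\bar\ell & 1-|b|^2-|\ell|^2\end{pmatrix}.
\]
Since $|b|<1$, the $(1,1)$ entry is strictly positive, so the whole matrix is positive semidefinite if and only if its determinant is non-negative; expansion gives $\det(I-T_0^*T_0) = (1-|b|^2)^2 - |\ell|^2$. The hypothesis $\|T_0\|=1$ is equivalent to $I-T_0^*T_0$ being positive semidefinite with a nontrivial kernel, i.e.\ the determinant is zero. Hence $|\ell|=1-|b|^2$, and writing $\ell$ in polar form produces the desired $\theta$.

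For the second claim, note first that the upper-left $2\times 2$ block of $T$ is precisely $T_0$ with $\ell=e$, which has norm one; so $\|T\|\ge 1$, and combined with $\|T\|\le 1$ this forces $\|T\|=1$. To pin down $f$, I would compute $I-T^*T$ block-wise: its upper $2\times 2$ block is $I-T_0^*T_0$, which is positive semidefinite of rank one with kernel spanned by the explicit vector $v_0 = (\bar b\,e,\, 1-|b|^2)^\top$. The key observation is that for any PSD block matrix $\begin{pmatrix} A & B \\ B^* & D\end{pmatrix}$, a kernel vector of $A$ extended by zero is forced to lie in the kernel of the full matrix; in particular $B^* v_0 = 0$. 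Applied to $I-T^*T$, this yields a single linear equation whose coefficient of $f$ is nonzero, and simplification (using $|e|=1-|b|^2$) gives the unique value $f = e^{i\theta}\bar b\,e$.

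With $e$ and $f$ determined, the rank-one outer-product formulas come out by direct substitution: every entry of $I-T^*T$ carries a factor of $1-|b|^2$, and the remaining $3\times 3$ matrix is recognized as $vv^*$ for the explicit vector $v=(1,\, b e^{-i\theta},\, b^2 e^{-2i\theta})^\top$. This exhibits $I-T^*T$ in the form $aa^*$ with $a=\sqrt{1-|b|^2}\,(1,c,c^2)^\top$ and $c=e^{-i\theta}b$; the corresponding calculation for $I-TT^*$ is the mirror image, with the ordering of the components of $a$ reversed (since $TT^*$ places the largest diagonal entry in the opposite corner). Finally, because $|c|=|b|$,
\[
 \|a\|^2 \;=\; (1-|b|^2)\bigl(1+|c|^2+|c|^4\bigr) \;=\; (1-|b|^2)(1+|b|^2+|b|^4) \;=\; 1-|b|^{6},
\]
which is strictly less than one once $b\ne 0$. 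The main obstacle is the bookkeeping for phases: making sure the $e^{i\theta}$ factors in $e$, in $f$, and in the resulting $c$ all line up consistently, and carefully justifying that the kernel-extension argument gives uniqueness of $f$ rather than only a necessary condition.
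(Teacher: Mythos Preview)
Your approach is correct and is precisely the ``direct computation'' the paper's one-line proof gestures at but does not carry out. Your explicit calculation in fact corrects minor typos in the printed statement: the $(1,c,c^2)$ vector belongs to $I-T^*T$ rather than $I-TT^*$ (and vice versa), and $\|a\|^2=1-|b|^6$ rather than $\sqrt{1-|b|^6}$, with strict inequality exactly when $b\ne 0$, as you correctly observe.
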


\begin{proof}
 Apply the Caratheodory interpolation theorem or
 simply verify by direct computation.
\end{proof}

 Lemmas~\ref{l:Cinterpolate} and \ref{l:eitheror} together
 reduce the proof of Theorem~\ref{t:mainR} to 
 two cases. Namely, using $\altL$ from equation~\eqref{d:L}, there 
 exists $\theta_1,\theta_2\in\mathbb{R}$ such that either
\begin{equation}
\label{case1}
\altL = \begin{pmatrix}  
  e^{i \theta_1}(|b_1|^2-1)&0\\0& e^{i\theta_2} (|b_2|^2-1) \end{pmatrix}
\end{equation}
 or 
\begin{equation}
\label{case2}
  \altL= \begin{pmatrix} 0 & e^{i\theta_1} (|b_1|^2-1) \\ e^{i\theta_2}(|b_2|-1) & 0
  \end{pmatrix}.
\end{equation}

\begin{lemma}
 \label{l:constraints}
 Suppose $\alpha,\beta, b\in\C$ and  $|b|<1.$ Let
 $e=\eit (|b|^2-1).$ %

If 
\[
 R=\begin{pmatrix} b & \alpha & \beta \\ 0 &b &e \\0&0&b\end{pmatrix}
\]
 is a contraction,  then $\|R\|=1$ and $(\alpha,\beta)$ is
 a multiple of $(1, \eit b^*).$  Equivalently,
 $\beta=\alpha \eit b^*.$

 Similarly, if
\[
 R=\begin{pmatrix} b& e & \beta \\ 0 & b & \alpha \\ 0 &0& b\end{pmatrix}
\]
 is a contraction, then $\|R\|=1$ and $\beta = \alpha \eit  b^*.$ 
\end{lemma}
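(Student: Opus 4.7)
The plan is to treat both cases by a block Schur-complement calculation, reducing the positive semidefiniteness of $I - RR^*$ (or $I - R^*R$) to a one-line linear equation in $\alpha,\beta$, and to reduce Case 2 to Case 1 by a transpose-and-flip.

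For Case 1, partition $R = \begin{pmatrix} b & w^* \\ 0 & T\end{pmatrix}$ with $w^* = (\alpha,\beta)$ and $T = \begin{pmatrix} b & e \\ 0 & b\end{pmatrix}$. By the two-dimensional analog of Lemma \ref{l:Cinterpolate} (or a direct check), $\|T\| = 1$, so $\|R\|\geq \|T\| = 1$; combined with the contractivity hypothesis, $\|R\|=1$. Expanding
\[
I - RR^* = \begin{pmatrix} 1 - |b|^2 - \|w\|^2 & -w^* T^* \\ -Tw & I - TT^*\end{pmatrix},
\]
a short computation using $|e|^2 = (1-|b|^2)^2$ gives $I - TT^* = (1-|b|^2)\, v v^*$ with $v = (e^{i\theta}b^*, 1)^T$, so $I - TT^*$ is rank one and its range is the line through $v$. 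A necessary condition for $I - RR^* \succeq 0$ is that the off-diagonal block $-Tw$ lies in this range, i.e., $Tw$ is a scalar multiple of $v$.

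Writing $Tw = (b\alpha^* + e\beta^*,\, b\beta^*)^T$ and imposing proportionality with $(e^{i\theta}b^*, 1)^T$ gives $b\alpha^* + e\beta^* = (b\beta^*)(e^{i\theta}b^*) = |b|^2 e^{i\theta}\beta^*$. Substituting $e = e^{i\theta}(|b|^2-1)$, the $|b|^2 e^{i\theta}\beta^*$ terms cancel and the condition collapses to $b\alpha^* = e^{i\theta}\beta^*$, equivalently $\beta = \alpha e^{i\theta}b^*$. Thus $(\alpha,\beta)$ is a multiple of $(1, e^{i\theta}b^*)$, as claimed.

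For Case 2, let $J$ denote the $3\times 3$ antidiagonal permutation. A direct verification shows that the Case 2 matrix equals $JR_1^TJ$, where $R_1$ is the Case 1 matrix built from the same parameters $b, e, \alpha, \beta$. Since the spectral norm is invariant under transpose and under unitary conjugation by $J$, the Case 2 contractivity hypothesis is equivalent to the Case 1 hypothesis, and both $\|R\|=1$ and $\beta = \alpha e^{i\theta}b^*$ transfer. One could alternatively repeat the Schur-complement argument with $R$ partitioned as $\begin{pmatrix} T & u \\ 0 & b\end{pmatrix}$, $u = (\beta,\alpha)^T$, and $I - R^*R$ in place of $I - RR^*$; the same scalar equation emerges.

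The main obstacle is the bookkeeping with conjugates when identifying $v$ and the orthogonality condition; a sign or conjugation error would alter the final relation. Because $I - TT^*$ has rank exactly one, however, the Schur argument collapses to a single scalar equation, making the calculation routine once $v$ is correctly identified.
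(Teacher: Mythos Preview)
Your argument is correct and follows the same basic strategy as the paper: partition $R$ into a scalar corner and the $2\times 2$ block $T=C=\begin{pmatrix} b & e\\ 0 & b\end{pmatrix}$, then exploit that the defect of this block is rank one. Two small differences are worth noting.

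First, the paper works with $I-R^*R$ rather than $I-RR^*$. With your partition $R=\begin{pmatrix} b & v^*\\ 0 & C\end{pmatrix}$, the lower $2\times 2$ block of $I-R^*R$ is exactly $I-C^*C-vv^*$, and positivity of a principal block is automatic. Since $I-C^*C=(1-|b|^2)ff^*$ with $f=\begin{pmatrix}1\\ e^{-i\theta}b\end{pmatrix}$ is rank one, $vv^*\preceq I-C^*C$ forces $v$ itself to be a multiple of $f$, which is precisely the conclusion $(\alpha,\beta)\parallel(1,e^{i\theta}b^*)$. This bypasses computing $Tw$ and solving the proportionality system; your route via the Schur range condition on $I-RR^*$ reaches the same place with one extra linear equation to unwind.

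Second, your reduction of Case~2 to Case~1 via $R_2=JR_1^{T}J$ is a clean shortcut; the paper simply says the second case is similar and omits it.
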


\begin{proof}
Let 
\[
 C=\begin{pmatrix} b &e \\0 & b\end{pmatrix},
\]
and $v^* =\begin{pmatrix} \alpha & \beta \end{pmatrix}$ 
so that 
\[
 R=\begin{pmatrix} b & v^* \\ 0 & C \end{pmatrix}.
\]
Note that $I-C^*C = (1-|b|^2)ff^*$ where 
\[
 f=  \begin{pmatrix} 1 \\ \emit b \end{pmatrix}. 
\]
Since $R$ is a contraction, $I-C^*C - vv^* \succeq 0.$
 Hence $v$ is a multiple of $f.$  The proof of 
 the second part is similar and omitted.
\end{proof}

\begin{lemma}
\label{l:boundbelow}
 Suppose $b\in\C$ with $|b|<1,$  $\lambda,\theta\in \mathbb{R}$
 and $0<\lambda <1.$ Let $e=\eit (|b|^2-1)$ and
 $f=\eit b^* e$ and
\[
  X=\begin{pmatrix} b& \lambda e & \lambda  f \\ 0 & b & e \\
     0&0&b \end{pmatrix},  \ \ \ 
   Y = \begin{pmatrix} b& e & \lambda f \\ 0 & b & \lambda e \\
  0 &0 & b\end{pmatrix}.
\]
 For $P$ equal to any of $X^*X,$ $XX^*,$ $Y^*Y$ and $YY^*,$ 
 there is a two dimensional subspace
 $\cN\subseteq \C^3$ such that the compression of $P$ to $\cN$ is 
 bounded below by $\lambda^2.$ Further, if $b\ne 0,$
 then $P$ is strictly bounded below by $\lambda^2.$
\end{lemma}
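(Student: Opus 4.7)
The plan is to factor both $X$ and $Y$ as diagonal conjugates of the triangular Toeplitz matrix
\[
 T = \begin{pmatrix} b & e & f \\ 0 & b & e \\ 0 & 0 & b \end{pmatrix}
\]
from Lemma~\ref{l:Cinterpolate}. A direct check shows
\[
 X = \mathrm{diag}(1,1/\lambda,1/\lambda)\, T\, \mathrm{diag}(1,\lambda,\lambda), \qquad
 Y = \mathrm{diag}(1,1,1/\lambda)\, T\, \mathrm{diag}(1,1,\lambda).
\]
Each of the four products $P\in\{XX^*,\,X^*X,\,YY^*,\,Y^*Y\}$ then takes the form $D\,T\,\Delta\,T^*\,D$ or $D\,T^*\,\Delta\,T\,D$ with $D$ and $\Delta$ positive diagonal.

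Lemma~\ref{l:Cinterpolate} supplies the identities $TT^* = I - aa^*$ and $T^*T = I - a'(a')^*$ for specific vectors $a,a'$.  The next step is to split each inner diagonal $\Delta$ as $\lambda^2 I$ plus a positive rank one or rank two perturbation supported on coordinate directions; the perturbative piece expands cleanly via $Te_1 = b\,e_1$ and $T^*e_3 = b^*e_3$, while the $I - aa^*$ (or $I - a'(a')^*$) contribution produces a single rank one negative term.  Combining everything yields, in each case, a decomposition
\[
 P - \lambda^2 I \;=\; R \;-\; v v^*,
\]
where $R$ is positive semidefinite and $vv^*$ is positive semidefinite of rank one.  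For instance, in the representative case $P = XX^*$, with $D_1 = \mathrm{diag}(1,1/\lambda,1/\lambda)$,
\[
 XX^* - \lambda^2 I = (1-\lambda^2)\bigl(|b|^2\, e_1e_1^* + e_2e_2^* + e_3e_3^*\bigr) - \lambda^2(D_1 a)(D_1 a)^*.
\]

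Once each case is in the form $R - vv^*$, the conclusion follows from Cauchy interlacing for rank one perturbations: on a $3\times 3$ Hermitian matrix the eigenvalues of $R - vv^*$ interlace those of $R$, and in particular $\lambda_2(R - vv^*) \ge \lambda_3(R) \ge 0$, with strict inequality when $R$ is positive definite.  Letting $\cN$ be the span of two eigenvectors of $P - \lambda^2 I$ corresponding to its two largest eigenvalues produces a two dimensional subspace on which $P - \lambda^2 I$ acts positive semidefinitely; equivalently, the compression of $P$ to $\cN$ is bounded below by $\lambda^2$.

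The main obstacle is bookkeeping: one must check that in each of the four cases the positive semidefinite remainder $R$ is in fact \emph{strictly} positive definite when $b \ne 0$.  In each case this reduces to a single $2\times 2$ block whose determinant evaluates to a positive multiple of $|b|^4$, giving strict positivity precisely when $b \ne 0$ and hence the strict version of the bound.
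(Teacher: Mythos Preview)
Your route is sound and genuinely different from the paper's. The paper writes $X=\Lambda A\Lambda^{-1}$ with $\Lambda=\mathrm{diag}(\lambda,1,1)$ (the same conjugation as yours up to a scalar) and argues via the inequality chain
\[
X^*X=\Lambda^{-1}A^*\Lambda^2 A\Lambda^{-1}\succeq \lambda^2\Lambda^{-1}A^*A\Lambda^{-1}=\lambda^2\Lambda^{-1}(I-aa^*)\Lambda^{-1}\succeq \lambda^2(I-cc^*),
\]
so that $c^\perp$ is the two-dimensional subspace; strictness for $b\ne 0$ is then extracted by tracing equality back through both inequalities, which forces first $\gamma_1=0$ and then $A\gamma\in\C e_1$, hence $b=0$. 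You instead aim for an exact identity $P-\lambda^2 I=R-vv^*$ and invoke rank-one interlacing, converting the strict case into a direct positivity check on $R$. Both arguments rest on the same diagonal conjugation and on $TT^*=I-aa^*$ from Lemma~\ref{l:Cinterpolate}; yours trades the equality-tracking for a bit more computation of $R$ but a cleaner endgame.

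One point needs tightening. With your stated factorizations the inner diagonal for $X^*X$ is $\mathrm{diag}(1,\lambda^{-2},\lambda^{-2})$ and for $Y^*Y$ is $\mathrm{diag}(1,1,\lambda^{-2})$; in neither case is $\Delta-\lambda^2 I$ of rank at most two, so the split you describe does not apply as written. The cheapest repair is to note that $X^*X$ and $XX^*$ (likewise $Y^*Y$ and $YY^*$) are isospectral, reducing everything to the two cases your displayed formula does handle. Alternatively, split $\Delta$ as $\mu I$ plus a positive remainder with $\mu=\min_i\Delta_{ii}$ rather than $\mu=\lambda^2$: for $X^*X$ this gives $\mu=1$, a rank-two remainder supported on $e_2,e_3$, and (using $T^*e_3=\bar b\,e_3$ and $T^*e_2=\bar b\,e_2+\bar e\,e_3$) the nontrivial block of $R/(1-\lambda^2)$ is $\begin{psmallmatrix}|b|^2 & \bar b e\\ b\bar e & |b|^2+|e|^2\end{psmallmatrix}$ with determinant $|b|^4$, exactly as your final paragraph predicts.
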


\begin{proof}
Let
\[
 A = \begin{pmatrix} b & e & f \\ 
    0 & b & e \\ 0 & 0 & b \end{pmatrix}.
\]
By Lemma~\ref{l:Cinterpolate}  %
\[
 A^*A =I - aa^*,
\]
 for some $a\in\C^3$ of norm at most one. Setting
\[
 \Lambda =\begin{pmatrix} \lambda &0&0\\0&1&0\\0&0&1\end{pmatrix},
\]
observe  that $X=\Lambda A\Lambda^{-1}.$  Thus
\begin{equation}
\label{e:keyle}
\begin{split}
 X^*X = & \Lambda^{-1} A^* \Lambda^2 A \Lambda^{-1}
 \succeq \lambda^2  \Lambda^{-1} A^*A \Lambda^{-1}\\
& = \lambda^2 \Lambda^{-1} (I-aa^*) \Lambda^{-1} \succeq  
  \lambda^2 \left (I - cc^*\right ),
\end{split}
\end{equation}
where $c=\Lambda^{-1}a.$ 
Thus $X^*X -\lambda^2$ is positive 
 semidefinite on the orthogonal complement of $c.$

  If $\gamma$ is a unit vector that is  orthogonal to $c$ and 
 $\|X\gamma\|=\lambda,$ then the inequalities in
 equation \eqref{e:keyle} are equalities. Using $|\lambda|<1,$ from the second
 of these inequalities,  $\lambda^2 \| \Lambda^{-1} \gamma\|= \lambda^2.$
 Thus, writing $\gamma =\sum \gamma_j e_j$ with respect to
 the standard orthonormal basis, $\gamma_1=0$. In particular,
 $\Lambda^{-1}\gamma=\gamma.$ Equality in the first inequality
 now  implies $A\gamma = \kappa e_1$ for some unimodular $\kappa.$
 Hence $b=0$. Thus, if $b \ne 0$ and $h\in\C^3$ is orthogonal
 to $c,$ then $\|X h\|>\lambda \|h\|.$   
   The remaining statements can be proved 
 similarly. The details are omitted. 
\end{proof}

\subsection{Order three nilpotency}
\label{ssec:order3}
\label{s:order3}
Let \index{$\Theta$}
\begin{equation*}
 \Theta =\{(\lambda,\mu)  \in \C : |\lambda|^2 C_1^*C_1 + |\mu|^2 C_2 C_2^*
    \prec I\}.
\end{equation*}
For $\kappa,\lambda,\mu,\nu\in \CC$, let \index{$T=T(\kappa,\lambda,\mu,\nu)$}
\begin{equation}
\label{def:T+3}
T(\kappa,\lambda,\mu,\nu)=
 \left ( \begin{pmatrix} 0&\lambda&0\\0&0&\kappa\\0&0&0\end{pmatrix},
        \begin{pmatrix} 0&\nu&0\\0&0&\mu\\0&0&0\end{pmatrix} \right ).
\end{equation}

\begin{lemma}
\label{l:Theta}
  The set $\Theta$ is open and  contains a neighborhood of $0.$ 

  For  each $(\lambda,\mu) \in\Theta$ and $|\kappa|,|\nu|< 1$  the tuple 
  $T(\kappa,\lambda,\mu,\nu)$ 
 is in $\fP_{\EE}.$ 

 If $|\kappa|,|\nu|\le 1$
  and $(\lambda,\nu)$ is in the closure of
 $\Theta,$ and if either $|\kappa|=1,$ or  $|\nu|=1$
 or $(\lambda,\nu)$ is in the boundary of $\Theta,$
 then $T(\kappa,\lambda,\mu,\nu)$  is in the boundary of
 $\fP_{\EE}.$
\end{lemma}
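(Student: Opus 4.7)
The plan is to reduce everything to a direct application of Lemma~\ref{l:inDA-alt} after exploiting the very special shape of $T(\kappa,\lambda,\mu,\nu).$ First I would compute $T_1^*T_1$ and $T_2T_2^*.$ Because each $T_j$ has at most one nonzero entry in positions $(1,2)$ and $(2,3),$ both products are diagonal, and consequently
\[
 M := T_1^*T_1 \otimes C_1^*C_1 + T_2T_2^* \otimes C_2C_2^*
\]
is block diagonal on $\C^3\otimes \C^d$ with respect to the decomposition $\oplus_{j=1}^3 (e_j\otimes \C^d),$ with diagonal blocks
\[
 B_1 = |\nu|^2 C_2C_2^*,
 \qquad
 B_2 = |\lambda|^2 C_1^*C_1 + |\mu|^2 C_2C_2^*,
 \qquad
 B_3 = |\kappa|^2 C_1^*C_1.
\]
Since each block depends only on one of $\nu,\kappa$ or the pair $(\lambda,\mu),$ all three parts of the lemma reduce to separate positivity questions on these blocks.

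Openness of $\Theta$ and containment of a neighborhood of $0$ follow immediately from continuity of $(\lambda,\mu)\mapsto B_2$ together with $0\prec I,$ giving the first assertion. For the second, under the hypotheses every block satisfies $B_j \prec I$ (using $\|C_j\|=1$ for $|\kappa|,|\nu|<1,$ and the definition of $\Theta$ for $B_2$), so $M\prec I$ and Lemma~\ref{l:inDA-alt} places $T$ in $\fP_{\EE}.$

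For the third assertion, the closure and $\le 1$ hypotheses translate to $B_j \preceq I$ for each $j,$ hence $M\preceq I.$ By Lemma~\ref{l:inDA-alt}, it remains to exhibit a nonzero $\gamma\in \C^3\otimes\C^d$ with $M\gamma=\gamma.$ The three disjunctive conditions $|\kappa|=1,$ $|\nu|=1,$ and $(\lambda,\mu)\in\partial\Theta$ are exactly the conditions for $B_3,$ $B_1,$ and $B_2$ (respectively) to have $1$ as an eigenvalue. In each case I would take $\gamma = e_j\otimes \eta$ for the appropriate index $j\in\{3,1,2\}$ with $\eta$ a unit eigenvector of the corresponding block at eigenvalue $1;$ such $\eta$ exists because $\|C_1\|=\|C_2\|=1$ and these blocks are finite-dimensional selfadjoint operators that attain their operator norm. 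Then $M\gamma = e_j\otimes B_j\eta = \gamma,$ placing $T$ on the boundary of $\fP_{\EE}.$

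There is no substantive obstacle: once the block-diagonal form of $M$ is recognized, the argument is pure bookkeeping, and the only subtlety is matching each boundary hypothesis to its corresponding diagonal block.
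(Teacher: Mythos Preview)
Your proposal is correct and follows essentially the same approach as the paper: the paper's proof simply records that $I - (T_1^*T_1\otimes C_1^*C_1 + T_2T_2^*\otimes C_2C_2^*)$ is (unitarily equivalent to) the block-diagonal matrix with blocks $I-|\nu|^2 C_2C_2^*,$ $I-(|\lambda|^2 C_1^*C_1+|\mu|^2 C_2C_2^*),$ and $I-|\kappa|^2 C_1^*C_1,$ and leaves the remaining verifications to the reader. Your write-up fills in exactly those verifications via Lemma~\ref{l:inDA-alt}, and also correctly reads the evident typo $(\lambda,\nu)$ in the statement as $(\lambda,\mu)$.
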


\begin{proof}
Observe $I -  \left (T_1^*T_1 \otimes C_1^* C_1 + T_2T_2^* \otimes C_2C_2^*  \right )$
 is unitarily equivalent to 
\[
\begin{pmatrix} I-  |\nu|^2 C_2C_2^* &0&0\\0& I- \left (|\lambda|^2 C_1^*C_1 
   + |\mu|^2 C_2 C_2^*\right )
   \\ 0&0& I-|\kappa|^2 C_1^* C_1 \end{pmatrix}.
\]
\end{proof}

We now take up the two cases \eqref{i:a} and \eqref{i:b} of
 Lemma~\ref{l:eitheror}. 

\subsubsection{The $\ell_{1,2}=0=\ell_{2,1}$ case}
\label{sec:case12=0}
 Suppose, for the duration of this subsection, $\ell_{1,2}=0=\ell_{2,1}$
 and thus, as in equation~\eqref{case1},  
 $e_j:=\ell_{j,j} = e^{i\theta_j} (|b_j|^2-1).$   
 Let 
\begin{equation*}
  f_j= e^{2i \theta_j} b_j^* (|b_j|^2-1) =e^{i\theta_j}  b_j^* e_j.
\end{equation*}
\index{$e_j$} \index{$f_j$}

\begin{lemma}
\label{l:b1e1f1}
  If  $N=(N_1,N_2)$ is nilpotent of order three, then
\[
 \varphi_1(N)  = b_1 I + e_1 N_1 + f_1 N_1^2.
\]
\end{lemma}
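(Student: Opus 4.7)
Since $N$ is nilpotent of order three, $w(N) = 0$ for all $|w| \ge 3$, so Lemma~\ref{l:nilp-poly} together with the extension of $\varphi_1$ to nilpotent tuples gives
\[
\varphi_1(N) = b_1 I + \ell_{1,1} N_1 + \ell_{1,2} N_2 + \sum_{|w|=2} (\varphi_1)_w \, w(N).
\]
Under the standing hypothesis $\ell_{1,2} = 0$ and $\ell_{1,1} = e_1$, so the task reduces to showing $(\varphi_1)_{11} = f_1$ and $(\varphi_1)_{12} = (\varphi_1)_{21} = (\varphi_1)_{22} = 0$. The plan is to probe $\varphi_1$ at carefully chosen $3\times 3$ nilpotent tuples $N$ in the closure of $\fP_{\EE}$; then, since $\fP_{\EE}$ is contained in the free bidisk (Lemma~\ref{l:freethebidisk}) and $\varphi$ sends boundary points to boundary points (Lemma~\ref{l:deltodel}), $\varphi_1(N)$ is an upper-triangular $3\times 3$ contraction with diagonal $b_1$, and the Caratheodory-type interpolation results of Lemmas~\ref{l:Cinterpolate} and~\ref{l:constraints} constrain its superdiagonal and corner entries.

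For $(\varphi_1)_{11}$, take $N = (S, 0)$ with $S$ the $3\times 3$ upper shift. Since $\|S\| = \|C_1\| = 1$, this tuple lies in the boundary of $\fP_{\EE}$, and $\varphi_1(N)$ is the Toeplitz matrix with diagonal $b_1$, superdiagonal $e_1$, and corner $(\varphi_1)_{11}$, so Lemma~\ref{l:Cinterpolate} forces $(\varphi_1)_{11} = e^{i\theta_1} b_1^* e_1 = f_1$. For $(\varphi_1)_{21}$ and $(\varphi_1)_{22}$, take $N = (E_{23}, d E_{12} + f E_{23})$ with $|d|,|f| \le 1$; a direct computation gives $N_1^* N_1 = E_{33}$ and $N_2 N_2^* = |d|^2 E_{11} + |f|^2 E_{22}$, so the spectrahedral inequality for $N$ decouples into diagonal blocks each $\preceq I$, placing $N$ in the closure of $\fP_{\EE}$. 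The resulting
\[
\varphi_1(N) = \begin{pmatrix} b_1 & 0 & (\varphi_1)_{21}\, d + (\varphi_1)_{22}\, d f \\ 0 & b_1 & e_1 \\ 0 & 0 & b_1 \end{pmatrix}
\]
satisfies the hypothesis of Lemma~\ref{l:constraints} (first form) with $\alpha = 0$, so its corner vanishes; separately varying $d$ and $f$ then yields $(\varphi_1)_{21} = (\varphi_1)_{22} = 0$.

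The main obstacle is $(\varphi_1)_{12} = 0$. The natural probe $N = (\alpha E_{12}, \beta E_{23})$ produces $N_1 N_2 = \alpha \beta E_{13}$ as its only nonzero quadratic product, so $\varphi_1(N)$ has corner $(\varphi_1)_{12}\alpha\beta$; however, both $(N_1)_{12}$ and $(N_2)_{23}$ feed the $(2,2)$ diagonal block with the constraint $|\alpha|^2 C_1^* C_1 + |\beta|^2 C_2 C_2^* \preceq I$, i.e., $(\alpha,\beta) \in \overline{\Theta}$, and Assumption~\ref{a:notleI} precludes $|\alpha| = |\beta| = 1$. Consequently neither superdiagonal of $\varphi_1(N)$ need equal the Caratheodory value $e_1$, and Lemma~\ref{l:constraints} alone cannot pin the corner down. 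The plan for this step is to first run the symmetric version of the two preceding arguments on $\varphi_2$ to obtain $(\varphi_2)_{22} = f_2$ and $(\varphi_2)_{11} = (\varphi_2)_{21} = 0$, so that $\varphi_2(N) = b_2 I + e_2 \beta E_{23} + (\varphi_2)_{12}\alpha\beta E_{13}$ is also determined up to a single unknown corner. Then, choosing $(\alpha,\beta)$ on the boundary of $\Theta$ so that $N$ lies on the boundary of $\fP_{\EE}$, invoke the full spectrahedral boundary condition of Lemma~\ref{l:inDA-alt} for $\varphi(N)$; testing the resulting matrix inequality $\varphi_1(N)^* \varphi_1(N) \otimes C_1^*C_1 + \varphi_2(N) \varphi_2(N)^* \otimes C_2 C_2^* \preceq I$ against the unit eigenvector that saturates the corresponding domain constraint should couple the two remaining ``hard'' corners and force both, including $(\varphi_1)_{12}\alpha\beta$, to vanish.
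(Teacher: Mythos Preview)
Your first two steps (determining $(\varphi_1)_{11}=f_1$ and $(\varphi_1)_{21}=(\varphi_1)_{22}=0$) are correct and essentially the same as the paper's, with slightly different but equally valid test tuples. The gap is in the last step: what you write for $(\varphi_1)_{12}$ is not a proof but a plan, and the plan is both unnecessarily complicated and not carried out. You propose to first prove the analogous partial result for $\varphi_2$ and then exploit the \emph{coupled} boundary condition $\varphi_1(N)^*\varphi_1(N)\otimes C_1^*C_1+\varphi_2(N)\varphi_2(N)^*\otimes C_2C_2^*\preceq I$ at a point on $\partial\Theta$; but you never verify that the saturating eigenvector actually sees the two unknown corners, and the phrase ``should couple \ldots and force both \ldots to vanish'' is precisely the step that needs an argument.

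The difficulty you identified is real for your probe $N=(\alpha E_{12},\beta E_{23})$, but it is an artifact of restricting each $N_j$ to have a single nonzero entry. The paper instead uses the four-parameter family $T(\kappa,\lambda,\mu,\nu)$ of equation~\eqref{def:T+3}, specifically $T=T(1,\lambda,\mu,1)$ with $(\lambda,\mu)\in\Theta$. The point is that the $(2,3)$ entry of $T_1$ is $\kappa=1$, so the $(2,3)$ entry of $\varphi_1(T)$ is exactly $e_1$, and Lemma~\ref{l:constraints} (first form) applies directly without any scaling problem. The $\Theta$ constraint controls only the \emph{middle} diagonal block of the spectrahedral inequality (Lemma~\ref{l:Theta}) and is completely decoupled from the $(2,3)$ superdiagonal of $\varphi_1(T)$. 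One then reads off
\[
\varphi_1(T)=\begin{pmatrix} b_1 & \lambda e_1 & \lambda f_1+\lambda\mu\,(\varphi_1)_{12}\\ 0 & b_1 & e_1\\ 0 & 0 & b_1\end{pmatrix},
\]
and Lemma~\ref{l:constraints} forces the corner to equal $(\lambda e_1)e^{i\theta_1}b_1^*=\lambda f_1$, hence $\mu(\varphi_1)_{12}=0$ for every $\mu$ with $(\lambda,\mu)\in\Theta$. Since $\Theta$ contains a neighborhood of $0$, this gives $(\varphi_1)_{12}=0$ immediately, with no appeal to $\varphi_2$ at all.
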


\begin{proof}
  There exists $a_{j,k}$ for $1\le j,k\le 2$ such that, if
 $N=(N_1,N_2)$ is nilpotent of order three, then
\[
 \varphi_1(N) = b_1 + e_1 N_1 +\sum a_{j,k} N_j N_k.
\]

 Given $(\lambda,\mu)\in\Theta$ 
 or $(\lambda,\mu)\in \{(1,0),(0,1)\}$ and $|\nu|\le 1,$   the tuple
 $T=T(1,\lambda,\mu,\nu)$ of equation~\eqref{def:T+3} 
 lies in the boundary of $\fP_{\EE}$ by Lemma~\ref{l:Theta}. Hence
\[
 \varphi_1(T) = b_1 I + e_1 T_1 + \sum_{j,k=1}^2 a_{j,k} T_j T_k
\]
 has norm at most one.

 For $|\nu|\le 1$ and $T=T(1,1,0,\nu),$ %
\[
 \varphi_1(T) = \begin{pmatrix} b_1 & e_1 & a_{1,1} + \nu a_{2,1} \\
  0&b_1&e_1\\0&0&b_1 \end{pmatrix}.
\]
From Lemma~\ref{l:Cinterpolate} it follows that $a_{1,1} +\nu a_{2,1}=f_1$
 independent of $\nu.$
 Thus $a_{1,1}=f_1$ and $a_{2,1}=0.$

Choosing $T=(1,0,1,1),$
\[
 \varphi_1(T) = \begin{pmatrix} b_1 & 0 &  a_{2,2} \\
  0&b_1 & e_1 \\0&0&b_1 \end{pmatrix}
\]
is a contraction.
 Lemma~\ref{l:constraints}
 now implies $a_{2,2}=0.$

 Finally, given $(\lambda,\mu)\in \Theta,$
 and $T=T(1,\lambda,\mu,1)$, 
\[
 \varphi_1(T) =\begin{pmatrix} b_1 & \lambda e_1 & \lambda f_1 + 
   \lambda\mu a_{1,2} \\ 0 & b_1 & e_1 \\0&0&b_1 \end{pmatrix}.
\]
 By Lemma \ref{l:constraints}, %
\[
 e_1 \eit b_1^* = f_1 +\mu a_{1,2} 
\]
independent of $\mu.$ Thus  $a_{1,2}=0$
 and the proof is complete.
\end{proof}

\begin{lemma}
 \label{l:b2e2f2} If $N=(N_1, N_2)$ is nilpotent of order three,
  then \[ \varphi_2(N) = b_2 + e_2N_2 + f_2N_2^2. \]
\end{lemma}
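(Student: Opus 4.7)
The plan is to mirror the proof of Lemma~\ref{l:b1e1f1}, with the roles of the two coordinates interchanged. Since we are in case~\eqref{i:a} of Lemma~\ref{l:eitheror}, the power series expansion of $\varphi_2$ (valid on nilpotent tuples of order three) takes the form
\[
 \varphi_2(N) = b_2 + e_2 N_2 + \sum_{j,k=1}^2 c_{j,k} N_j N_k
\]
for some $c_{j,k}\in\C$, and the goal is to show $c_{2,2}=f_2$ and $c_{1,1}=c_{1,2}=c_{2,1}=0$. Throughout, the strategy is to evaluate $\varphi_2$ on carefully chosen order-three nilpotents $T(\kappa,\lambda,\mu,\nu)$ lying in the boundary of $\fP_{\EE}$ (via Lemma~\ref{l:Theta}); Lemma~\ref{l:deltodel} then guarantees $\varphi_2(T)$ is a contraction, and Lemmas~\ref{l:Cinterpolate} and~\ref{l:constraints} convert this into algebraic constraints on the $c_{j,k}$.

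First I would take $T(\kappa,0,1,1)$ with $|\kappa|\le 1$, which is in the boundary of $\fP_{\EE}$ because $(0,1)$ lies in the closure of $\Theta$ and $|\nu|=1$. A direct matrix calculation (only $T_2^2$ and $T_2T_1$ contribute to the $(1,3)$ entry) yields
\[
 \varphi_2(T(\kappa,0,1,1)) = \begin{pmatrix} b_2 & e_2 & \kappa c_{2,1}+c_{2,2} \\ 0 & b_2 & e_2 \\ 0 & 0 & b_2 \end{pmatrix},
\]
so Lemma~\ref{l:Cinterpolate} forces $\kappa c_{2,1}+c_{2,2}=f_2$ for every $|\kappa|\le 1$, hence $c_{2,1}=0$ and $c_{2,2}=f_2$. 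Next I would test $T(1,1,0,1)$ (in the boundary of $\fP_{\EE}$ since $(1,0)$ is in the closure of $\Theta$ and $|\kappa|=1$); using $c_{2,1}=0$, only $T_1^2$ contributes to the $(1,3)$ entry, producing
\[
 \varphi_2(T(1,1,0,1)) = \begin{pmatrix} b_2 & e_2 & c_{1,1} \\ 0 & b_2 & 0 \\ 0 & 0 & b_2 \end{pmatrix},
\]
and the second form of Lemma~\ref{l:constraints} (applied with $\alpha=0$, $\beta=c_{1,1}$) immediately yields $c_{1,1}=0$.

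Finally, for $c_{1,2}$ I would evaluate on $T(1,\lambda,\mu,1)$ for $(\lambda,\mu)\in\Theta$ with $\lambda\mu\ne 0$; such pairs exist because $\Theta$ is open and contains a neighborhood of the origin. Substituting the vanishings already established, the evaluation reduces to
\[
 \varphi_2(T(1,\lambda,\mu,1)) = \begin{pmatrix} b_2 & e_2 & \lambda\mu\, c_{1,2}+\mu f_2 \\ 0 & b_2 & e_2\mu \\ 0 & 0 & b_2 \end{pmatrix}.
\]
The second form of Lemma~\ref{l:constraints}, applied now with $\alpha=e_2\mu$, forces $\lambda\mu\, c_{1,2}+\mu f_2 = \alpha e^{i\theta_2} b_2^* = \mu f_2$, so $c_{1,2}=0$. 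The only genuine subtlety is verifying that each test tuple lies in the closure of $\fP_{\EE}$: Assumption~\ref{a:notleI} rules out the convenient choice $(\lambda,\mu)=(1,1)$, but the tuples used above all satisfy the hypotheses of Lemma~\ref{l:Theta}, and the last step only needs $\Theta$ to contain a point with both coordinates nonzero, which is guaranteed by $\Theta$'s openness at the origin.
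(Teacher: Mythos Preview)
Your proposal is correct and follows essentially the same approach as the paper's own proof: the same test tuples $T(\kappa,0,1,1)$, $T(1,1,0,1)$, and $T(1,\lambda,\mu,1)$ are used in the same order, with Lemmas~\ref{l:Cinterpolate} and~\ref{l:constraints} applied exactly as in the paper. Your write-up is in fact slightly more careful than the paper's, since you explicitly justify why each test tuple lies in the boundary of $\fP_{\EE}$ and why a point $(\lambda,\mu)\in\Theta$ with $\lambda\mu\ne 0$ exists.
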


\begin{proof}
 There exists $a_{j,k}\in\CC$ such that for $N=(N_1,N_2)$
 nilpotent of order three,
\[
 \varphi_2(T) = b_2 I + e_2 N_2 + \sum_{j,k=1}^2 a_{j,k} N_j N_k.
\]

 For $N=T=T(\kappa,0,1,1)$ and $|\kappa|\le 1,$ 
 \[
   \varphi_2(T) = \begin{pmatrix}
     b_2 & e_2 & \kappa a_{2,1} + a_{2,2} \\
     0 & b_2 & e_2 \\
     0 & 0 & b_2
   \end{pmatrix}
 \]
 is a contraction by Lemma~\ref{l:Theta}. 
 Lemma \ref{l:Cinterpolate} implies $\kappa a_{2,1} + a_{2,2} = e^{2i\theta_2}b^*(|b|^2-1) = f_2.$
 Hence $a_{2,1}=0$ and $a_{2,2}=f_2.$

 For $T = T(1,1,0,1)$ 
 \[
   \varphi_2(T) = \begin{pmatrix}
     b_2 & e_2 & a_{1,1} \\
     0 & b_2 & 0 \\
     0 & 0 & b_2
   \end{pmatrix}.
 \]
 From Lemma \ref{l:constraints}, it follows that $a_{1,1} = 0.$ 

 Finally, given $(\lambda, \mu) \in \Theta$ and
 choosing  $T = T(1,\lambda, \mu, 1),$
 \[
   \varphi_2(T) = \begin{pmatrix}
     b_2 & e_2 & \lambda\mu a_{1,2}+\mu f_2 \\
     0 & b_2 & \mu e_2 \\
     0 & 0 & b_2
   \end{pmatrix}.
 \]
 By Lemmas \ref{l:constraints} and \ref{l:Theta},
 \[
   e_2\eit b_2^* = f_2 + \lambda a_{1,2}
 \]
 independent of $\lambda.$ Hence, $a_{1,2} = 0$ and the proof is complete.
\end{proof}

The completion of the proof of Theorem~\ref{t:mainR}, 
in the present case, is at hand.
In view of Assumption \ref{a:notleI}, 
there is a $0<t<1$ such that 
\[
 t^2( C_1^* C_1 + C_2 C_2^* )\preceq I,
\]
 but, for $s>t,$
\[
s^2( C_1^* C_1 + C_2 C_2^* )\not\preceq I,
\]
 Hence, there is a unit vector $\gamma$ such that
\[
  t^2( C_1^* C_1 + C_2C_2^* )\gamma =\gamma.
\]
 Moreover, since $C_1$ and $C_2$ are contractions and $0<t<1,$
 we have $C_1\gamma\ne 0$ and $C_2^*\gamma\ne 0.$

Choose $T=T(1,t,t,1)$ and note that $T$ is nilpotent and in the boundary 
 of $\fP_{\EE}$ by Lemma~\ref{l:Theta}.  Thus, by Lemma~\ref{l:deltodel}, 
$R=\varphi(T)$ is in the boundary of $\fP_{\EE}.$  Let $R_j=\varphi_j(T).$ Thus,
 using Lemma~\ref{l:b1e1f1},
\[
 R_1 = \begin{pmatrix} b_1 & t e_1 & tf_1\\ 0 & b_1 & e_1\\ 0 &0&b_1\end{pmatrix}.
\]
 By Lemma~\ref{l:boundbelow}, there is a two dimensional
 subspace $\cN$ of $\C^3$ on which $R_1^*R_1$  is bounded below
 by $t^2,$ and strictly so if $b_1\ne 0.$ Likewise, 
 using Lemma~\ref{l:deltodel} and Lemma~\ref{l:b2e2f2},
\[
R_2 = \begin{pmatrix} b_2 & e_2 & tf_2\\ 0&b_2 &t e_2\\0&0&b_2\end{pmatrix}.
\]
By Lemma~\ref{l:boundbelow}, there is a two dimensional
 subspace $\cM$ of $\C^3$ on which $R_2R_2^*$ is bounded below
 by $t^2,$ and strictly so if $b_2\ne 0.$ Hence there is a
 unit vector $\Gamma\in \cN\cap \cM \subset \C^3$
 such that
\[
 \langle R_1\Gamma \,R_1\Gamma \rangle, \langle R_2^* \Gamma,R_2^*\Gamma \rangle 
 \ge  t^2 \|\Gamma \|^2 =t^2,
\]
 with strict inequality holding in the case $b_j\ne 0$.
It follows that, if either $b_1\ne 0$ or $b_2\ne 0,$ then
\[
\langle [R_1^*R_1\otimes C_1^* C_1 + R_2R_2^* \otimes C_2C_2^* ]
  \Gamma\otimes \gamma,\Gamma\otimes\gamma \rangle =
   \|R_1\Gamma\|^2 \, \|C_1\gamma\|^2 + \|R_2^*\Gamma\|^2 \, \|C_2^*\gamma\|^2 >1
\]
and thus $R=(R_1,R_2)$ is not in the closure of  $\fP_{\EE}$ 
 by Lemma~\ref{l:inDA-alt},
contradicting Lemma~\ref{l:deltodel} that says 
 $\varphi$ maps nilpotent elements of
 the boundary of $\fP_{\EE}$ into the  boundary (closure) of $\fP_{\EE}.$
This contradiction shows $b=0.$

 Since $b=0$ and the  domain $\fP_{\EE}$ is circular,  $\varphi$ is linear
 by  \cite[Theorem~4.4]{proper}.  Thus,
\[
 \varphi(x) 
   = - \begin{pmatrix} \eito x_1,  &  e^{i\theta_2} x_2 \end{pmatrix}.
\]

 \subsubsection{The case $\ell_{j,j}=0$}
\label{sec:casejj=0}
 In the case  \eqref{i:b} in Lemma~\ref{l:eitheror},  $\ell_{j,j}=0$ for $j=1,2$ and,
 from equation~\eqref{case2},
\[
 \ell_{1,2} =   e^{i\theta_1} (|b_1|^2-1), \ \ 
  \ell_{2,1} =  e^{i\theta_2}(|b_2|-1).
\]
 We will argue that these assumptions, together with
 Assumption~\ref{a:notleI}, leads to a contradiction.

 For notational convenience, let $\eo=\ell_{1,2}=\eito (|b_1|^2-1)$  and 
 $\fo=e^{2i\theta_1} b_1^*(|b_1|^2-1)= \eito b_1^* \eo.$
 In this case, for tuples $N$ nilpotent of order three
\[
 \varphi_1(N)  =  \varphi_1(N) =  b_1 + \eo N_2 + \sum_{j,k=1}^2 a_{j,k} N_jN_k,
\]
 for some $a_{j,k}\in\C.$

Since $T=T(0,0,1,1)$ (from equation~\eqref{def:T+3})
 is in the boundary of $\fP_{\EE},$ 
\[
 \varphi_1(T)= \begin{pmatrix} b_1 & e_1 & a_{2,2}
   \\ 0  & b_1 & e_1  \\ 0&0&b_1 \end{pmatrix}
\]
 is a contraction.
 It follows from Lemma \ref{l:Cinterpolate} 
  that $a_{2,2} =e^{2i\theta_1}b_1^*(|b_1|^2-1)= \fo.$

Next consider $T=T(1,0,0,1).$
 This tuple lies in the boundary of $\fP_{\EE}$ and hence,
\[
 \varphi_1(T) = \begin{pmatrix} b_1 & e_1  & a_{2,1} 
   \\ 0  & b_1 & 0 \\ 0&0&b_1 \end{pmatrix}
\]
 has norm at most one. A variation on Lemma \ref{l:constraints}
 implies  $a_{2,1}=0.$

Now consider $T=(1,1,0,1).$ 
 Again $T$ is in the boundary of $\fP_{\EE}$ and hence (using $a_{2,1}=0$)
\[
 \varphi_1(T) =\begin{pmatrix} b_1 & e_1  & a_{1,1}\\
 0&b_1 &0\\0&0&b_1 \end{pmatrix}
\]
 is a contraction.
  Hence  $a_{1,1}=0.$

 Showing  $a_{1,2}=0$ involves a bit more computation. 
 For $(\lambda,\mu)\in\Theta$, the tuple $T=(0,\lambda,\mu,1)$
 lies in the boundary of $\fP_{\EE}.$ Hence,
\[
 \varphi_1(T) = \begin{pmatrix} b_1 & \eo & \mu (\lambda a_{1,2} +\fo)\\
  0&b_1& \mu \eo \\ 0&0& b_1 \end{pmatrix}
\]
 has norm at most one. 
 A variation of Lemma \ref{l:constraints} now implies 
 that $a_{1,2}=0.$ Thus,  for tuples $N$ nilpotent of order three,
\[
 \varphi_1(N) = b_1 + \eo N_2 + \fo  N_2^2.
\]

 A similar argument shows, with $\et=\ell_{2,1}$  and 
 $\ft=e^{2i\theta_2} b_2^*(|b_2|^2-1)= e^{i\theta_2}  b_2^* \et,$
\[
 \varphi_2(N) = b_2 + \et N_1 + \ft N_1^2,
\]
 for tuples $N=(N_1,N_2)$ nilpotent of order three. 
 Indeed, 
\[
 \varphi_2(N) = b_2 + e_2 N_1 + \sum a_{j,k}N_jN_k.
\]
 Choosing $N=T=T(1,1,0,\nu)$ with $|\nu|\le 1,$
\[
 \varphi_2(T) = \begin{pmatrix} b_2 & e_2 & a_{1,1}+ \nu a_{2,1} \\
                   0 & b_2 & e_2 \\ 0& 0& b_2 \end{pmatrix}
\]
 from which it follows that $a_{1,1}=f_2$ and $a_{2,1} =0.$
 Choosing $T=T(1,0,1,1)$ shows $a_{2,2}=0.$ Finally,
with $(\lambda,\mu)\in\Theta,$ choosing $T=T(1,\lambda,\mu,1)$
 shows $a_{1,2}=0.$

In view of Assumption \ref{a:notleI}, 
there is a $0<t<1$  and unit vector $\gamma$ such that 
 both 
\[
 t^2( C_1^* C_1 + C_2 C_2^* )\preceq I,
 \ \ \   t^2( C_1^* C_1 + C_2^* C_2 )\gamma =\gamma
\]
and  $C_1\gamma\ne 0$ and $C_2^*\gamma\ne 0.$

As before, the tuple $T=T(1,t,t,1)$ is in the boundary of 
 $\fP_{\EE}.$ Hence, $R=\varphi(T)$ is also.   Let $R_j=\varphi_j(T).$
Thus,
\[
 R_1 = b_1 + e_1 T_2+f_1 T_2^2 
  = \begin{pmatrix} b_1 & e_1 & t f_1 \\ 0 & b_1 & t e_1 \\
   0&0&b_1\end{pmatrix}.
\]
 By Lemma~\ref{l:boundbelow}, there is a two dimensional
 subspace $\cM\subset \C^3$ on which $R_1^*R_1$ is bounded below
 by $t^2$ and strictly so if $b_1\ne 0.$
Similarly,
\[
 R_2 = \begin{pmatrix} b_2 & te_2 & t f_2 \\
   0 & b_2 & e_2 \\0&0&b_2\end{pmatrix}
\]
and, by Lemma~\ref{l:boundbelow}, there is a two
dimensional subspace $\cN\subset \C^3$ on which
 $R_2R_2^*$ is bounded below by $t^2$ and
strictly so if $b_2\ne 0.$
Hence  there is a unit vector $\Gamma\in \C^3$
 such that
\[
 \langle R_1\Gamma \,R_1\Gamma \rangle, \langle R_2^* \Gamma,R_2^*\Gamma \rangle 
 \ge  t^2 \|\Gamma\|^2 =t^2,
\]
 with strict inequality holding in the case $b_j\ne 0$.
It follows that if either $b_1\ne 0$ or $b_2\ne 0,$ then
\[
\langle [R_1^*R_1\otimes C_1^* C_1 + R_2^*R_2 \otimes C_2C_2^* ]
  \Gamma\otimes \gamma,\Gamma\otimes\gamma \rangle =
   \|R_1\Gamma\|^2 \, \|C_1\gamma\|^2 + \|R_2^*\Gamma\|^2 \, \|C_2^*\gamma\|^2 >1.
\]
Thus $(R_1,R_2)$ is not in the closure of  $\fP_{\EE}$ by Lemma~\ref{l:inDA-alt},
 contradicting Lemma~\ref{l:deltodel} that says $\varphi$
 maps nilpotent elements of
 the boundary of $\fP_{\EE}$ into the  boundary (closure) of $\fP_{\EE}.$
 Hence $b_1=0=b_2;$ that is $b=0.$

 Since $\varphi(0)=0$  and $\fP_{\EE}$ is  circular, 
by \cite[Theorem~4.4]{proper}, $\varphi$ is linear. Hence,
 \[
 \varphi(x) %
   = - \begin{pmatrix} \eito x_2,  e^{i\theta_2} x_1 \end{pmatrix}.
\]

  By Lemma~\ref{l:freethebidisk}, 
\[
 T=(T_1,T_2) = \left (  \begin{pmatrix} 0&0&0\\0&0&1\\0&0&0\end{pmatrix},
   \begin{pmatrix} 0&1&0\\0&0&0\\0&0&0 \end{pmatrix} \right),
\]
 is in the closure of $\fP_{\EE}.$ 
 On the other hand, 
\[
 T_2^*T_2 \otimes C_1^*C_1 + T_1T_1^* \otimes C_2C_2^* 
  =\begin{pmatrix} 0&0&0 \\ 0&C_1^* C_1 + C_2 C_2^* & 0 \\ 0& 0 &0 \end{pmatrix}
 \not\preceq  I
\]
 and therefore $\varphi(T)=(e^{i\theta_2} T_2, e^{i\theta_1} T_1)$ is not in the closure of $\fP_{\EE},$
 and we have reached a contradiction (of Lemma~\ref{l:deltodel})
  that completes the proof.


\newpage

\printindex

\end{document}